\newtheorem{theorem}{Theorem}[section]
\newtheorem{lemma}[theorem]{Lemma}
\newtheorem{corollary}[theorem]{Corollary}
\theoremstyle{remark}
\title{Equal Entries in Totally Positive Matrices\footnote{This work was supported in part by National Science foundation grant \#DMS-0751964}}
\author{
Miriam Farber\thanks{Department of Mathematics, Technion--Israel Institute of Technology, Haifa, IL-32000, Israel, (miriamf@tx.technion.ac.il)}
\and
Mitchell Faulk\thanks{Department of Mathematics, University of Notre Dame, Notre Dame, Indiana 46556, USA, (mfaulk1@nd.edu).}
\and
Charles R. Johnson\thanks{Department of Mathematics, College of William and Mary, Williamsburg, Virginia 23187, USA, (crjohn@wm.edu)}
\and
Evan Marzion\thanks{Department of Mathematics, University of Wisconsin-Madison, Madison, Wisconsin 53706, USA, (marzion@wisc.edu)}}
\begin{document}

\maketitle

\begin{abstract}
We show that the maximal number of equal entries in a totally positive (resp. totally nonsingular) $n\textrm{-by-}n$ matrix is $\Theta(n^{4/3})$ (resp. $\Theta(n^{3/2}$)). Relationships with point-line incidences in the plane, Bruhat order of permutations, and $TP$ completability are also presented. We also examine the number and positionings of equal $2\textrm{-by-}2$ minors in a $2\textrm{-by-}n$ $TP$ matrix, and give a relationship between the location of equal $2\textrm{-by-}2$ minors and outerplanar graphs.
\end{abstract}

\section{Introduction}
An $m\textrm{-by-}n$ matrix is called totally positive (non-negative), $TP$ ($TN$), if every minor of it is positive (non-negative). Such matrices play an important role in many facets of mathematics \cite{Fallat, Gasca}. It is known that $TN$ is the closure of $TP$, that is, there exists an arbitrarily small perturbation of any given $TN$ matrix that is $TP$. However, proofs of this typically perturb all, or most, entries, which is usually not necessary. Here, we advance the question of which are the minimal sets of entries that must be perturbed in a given $TN$ matrix that it become $TP$. Our interest stems, in part, from the likely value of this information in solving two outstanding problems: (1) the Garloff conjecture \cite{Garloff} and (2) the relationship between $TN$ completable and $TP$ completable patterns. In the latter case, author Johnson conjectures that the $TN$ completable patterns are properly contained in the $TP$ completable ones. However, the question of "small" perturbing sets is likely of interest for other reasons, as well as on its own. We are somewhat lax, at times, in the notion of perturbation that we use, however, in that we do not always require our perturbations to be small. For this we use the word "change" in place of "perturb".

Here, we begin the study of minimally perturbing collections by asking which sets of entries in the $J$ matrix (all 1's) need be perturbed.  Because of positive diagonal scaling, $J$ may as well be any positive rank 1 matrix. A matrix is called $TP_k$ ($TN_k$) if every $k\textrm{-by-}k$ sub-matrix of it is $TP$ ($TN$). A positive rank 1 matrix is $TN$ and $TP_1$. A virtue of considering $J$ is due to a very important fact: for any $TP_2$ matrix, some Hadamard power is $TP$ \cite{Fallat}. So, if $J$ can be perturbed to be $TP_2$, it can, using the same entries, be changed to $TP$. Hadamard powering preserves the 1's.

We call a matrix \emph{totally nonsingular}, $TNS$, if \emph{all} its minors are non-zero. To perturb a matrix with all non-zero entries to a $TNS$ matrix, at least one entry of every $2\textrm{-by-}2$ submatrix must be available for perturbation. For this, it is equivalent to consider 0,1 matrices in which no $2\textrm{-by-}2$ sub-matrix of 1's occurs (think of the 1's as unperturbed entries and the 0's as entries that may be perturbed). A certain amount of information about how many 1's such a matrix may have is available \cite{Furedi, Guy, Mendelsohn, Roman}, and it is known, asymptotically, that this is $O(n^{3/2})$ in the $n\textrm{-by-}n$ case \cite{Furedi}. We show that for a matrix with nonzero entries, perturbation of the entries corresponding to the 0's in any $(0,1)$- matrix with no $2\textrm{-by-}2$ matrix of 1's, is sufficient to achieve a $TNS$ matrix. This has the consequence that the maximum number of equal entries in an $n\textrm{-by-}n$ $TNS$ matrix is $\Theta(n^{3/2})$.

We say that a $(0,1)$- matrix is ($TP$-)\emph{changeable} if there is a change of the entries of $J$, corresponding to the 0's, that is $TP$. In the case of changing $J_n$ to $TP$, the greatest number of 1's that can be achieved asymptotically is $O(n^{4/3})$. This means that the maximum number of equal entries in either a $TP_2$ or $TP$ matrix is $\Theta(n^{4/3})$.

There are important, and previously unnoticed, connections between these ideas and a number of other mathematical concepts. As well as equal entries and minimally perturbable sets of entries, these include: incidences of points and lines in the Euclidian plane; the notion of positive orthogonal cycles defined in \cite{Pach}; Bruhat order on permutations; the $TP$ matrix completion problem; and the relative position of points above and below an individual line in a collection.

In the next section we present several useful definitions and known results. We also develop some tools to deal with our questions about total positivity. These tools come primarily from combinatorial geometry, but, surprisingly, the connection with total positivity was not previously noticed.\\

In section 3 we deal with the totally nonsingular case, and show that asymptotically at most $\Theta(n^{3/2})$ entries may be equal in an $n\textrm{-by-}n$ $TNS$ matrix.\\

In section 4 we deal with the connections between changeability and points and lines in the plane. If a configuration is the incidence matrix of points and lines in the plane (properly ordered), then we show that $J$ may be changed to $TP$ via this configuration. Using this, and prior work about incidences, we show that asymptotically at most $\Theta(n^{4/3})$ entries may be equal in an $n\textrm{-by-}n$ $TP$ matrix. In view of this count matching the maximum number of incidences of $n$ points and $n$ lines, we raise the question of whether a collection of equal entries in a $TP$ matrix must correspond to the incidence matrix of some points and lines in the plane.\\

In section 5 we consider and bound the maximum number of entries (and their positions) of an $n\textrm{-by-}n$ $TP$ matrix that lie among the $k$ smallest (among the $k$ largest) entries. (It follows that the smallest or largest cannot attain the high frequency of $\Theta(n^{4/3})$ mentioned above.) We also obtain some further geometric information about points and lines in the plane. The latter involves the frequency of point-line pairs in which the point lies bellow (above) the line (vertically).\\

In section, 6, we apply the ideas developed to $TP$ completion, and in section 7, we show that the notion of "positive orthogonal cycle", introduced in \cite{Pach}, may be characterized by the Bruhat comparability of the two constituent permutations. We also characterize the notion of "positive orthogonal collection" by Bruhat comparability of certain classes of matrices.
(The absence of a collection of) positive orthogonal collection is used to identify $(0,1)$- matrices via which $J$ may be changed to $TP$.\\

Finally, in section 8, we deal with the number and positionings of equal $2\textrm{-by-}2$ minors in a  $2\textrm{-by-}n$ $TP$ matrix. In the main result in this section, we present a relation between outerplanar graphs and the positioning of equal $2\textrm{-by-}2$ minors.
\section{Definitions and known results}
Let $M_{m,n}(\mathbb{F})$ denote the set of $m\textrm{-by-}n$ matrices over a field $\mathbb{F}$. Let $\mathbb{F}_2 = \{0,1\}$ denote the field with two elements. For each $x \in \mathbb{F}$, we define a map $C_x : M_{m,n}(\mathbb{F}) \to M_{m,n}(\mathbb{F}_2)$ by $C_x(A) = [c_{ij}]$ with
\begin{align*}
c_{ij} = \left\{\begin{array}{ll}
1, & a_{ij} = x \\
0, & a_{ij} \ne x
\end{array}
\right.
\end{align*}
if $A = [a_{ij}]$.  We say that $C_x(A)$ is the \emph{configuration of $x$ in $A$}.

The definitions of orthogonal cycle and positive orthogonal cycle are taken from \cite{Pach}. Let $A$ be an $m\textrm{-by-}n$ matrix and let $C = (p_0,p_1\ldots, p_{2k})$ be a sequence of positions of $A$.  We call $C$ an \emph{orthogonal cycle} if $p_0 = p_{2k}$ and for each $i$ satisfying $0 \le i <k$, positions $p_{2i}$ and $p_{2i+1}$ belong to the same row, while positions $p_{2i+1}$ and $p_{2i+2}$ belong to the same column.  If for each $i$ satisfying $0\le i \le2k$ the entry of $A$ in position $p_i$ is one, then $C$ is said to be an \emph{orthogonal cycle of $A$}.

Let $A$ be an $m\textrm{-by-}n$ matrix and let $C = (p_0,\ldots, p_{2k})$ be an orthogonal cycle of $A$.  Let $P(i,j)$ be the set of positions $\{(u,v): u>i, v>j\}$ of $A$.  Define a function
\begin{equation} \label{C}
\mathfrak{C}(i,j) := \\
\left|\{l \in \mathbb{Z}: 0< l \le k, p_{2l} \in P(i,j)\} \right| - \left|\{l\in \mathbb{Z}: 0< l \le k, p_{2l-1} \in P(i,j)\}\right|,
\end{equation}
We call an orthogonal cycle $C$ \emph{positive} if $\mathfrak{C}(i,j) \ge 0$ for each pair $(i,j)$ and $\mathfrak{C}(i,j) > 0$ for at least one such pair.

For example, the ones in the matrix $\left(
  \begin{array}{cccc}
    0 & 1 & 0 & 1 \\
    0 & 0 & 1 & 1 \\
    1 & 1 & 0 & 0 \\
    1 & 0 & 1 & 0 \\
  \end{array}
\right)$ form a positive orthogonal cycle.
In order to calculate the values of $\mathfrak{C}(i,j)$ for the matrix above, consider the matrix
$\left(
  \begin{array}{cccc}
    0 & p_0 & 0 & p_1 \\
    0 & 0 & p_3 & p_2 \\
    p_6 & p_7 & 0 & 0 \\
    p_5 & 0 & p_4 & 0 \\
  \end{array}
\right).$
One can see that $\mathfrak{C}(1,1)=0$, since $|\{p_4,p_2\}|-|\{p_7,p_3\}|=0$, and $\mathfrak{C}(2,2)=|\{p_4\}|=1$. Similarly, $\mathfrak{C}(i,j) \geq 0$ for all the pairs $(i,j)$, and hence the cycle is positive orthogonal.

Let $\mathcal{O} = \{C_a\}_{a \in A}$ be a collection of orthogonal cycles and let $\{\mathfrak{C}_a\}_{a \in A}$ be the associated functions as in (\ref{C}).  We say (similarly to \cite{Pach}) that the collection $\mathcal{O}$  of orthogonal cycles is \emph{positive} if for each $(i,j)$
\begin{align}\label{sum1}
\sum_{a \in A} \mathfrak{C}_a(i,j) \ge 0
\end{align}
and there exists at least one pair $(i,j)$ for which the sum in (\ref{sum1}) is positive.\\
We now recall an important result from \cite{Pach}

Let $M$ be an $m\textrm{-by-}n$ zero-one matrix . Consider the following properties of $M$.
\begin{enumerate}[(a)]
\item No collection of orthogonal cycles of $M$ is positive.
\item $M$ can be obtained from an $m\textrm{-by-}n$ real matrix $E = [e_{ij}]$ by replacing each $0$ entry of $E$ by $1$ and each nonzero entry by $0$.  Furthermore, for each $i$ and $j$ satisfying $1 \le i < m, 1 \le j < n$, the matrix $E$ satisfies $d_{i,j} := e_{i+1,j+1} - e_{i+1,j} - e_{i,j+1} + e_{i,j} > 0$.
\end{enumerate}
Then the following holds (\cite{Pach},part of Theorem 4):
\begin{theorem}\label{Pachzero}
For a zero-one $m\textrm{-by-}n$ matrix $M$, (a) $\iff$ (b).
\end{theorem}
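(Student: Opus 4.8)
The natural approach is to read condition (b) as the solvability of a system of strict linear inequalities and to dualize it; the resulting theorem-of-the-alternative certificate of unsolvability will turn out to be, essentially verbatim, a positive collection of orthogonal cycles of $M$, giving (a) $\iff$ (b). Write $S=\{(i,j):M_{ij}=1\}$ for the set of $1$-positions. First I would remove the nondegeneracy from (b): it asks for a real $E$ with $e_{ij}=0$ exactly on $S$, $e_{ij}\neq 0$ off $S$, and $d_{ij}>0$ on every interior cell, and I claim this is equivalent to the weaker demand that \emph{some} $E$ with $e_{ij}=0$ on $S$ has $d_{ij}>0$ on all interior cells. Indeed, the set of such $E$ is a nonempty, relatively open, convex subset of the coordinate subspace $\{E:e_{ij}=0\text{ on }S\}$, so inside it one can additionally impose the finitely many open conditions $e_{uv}\neq0$, $(u,v)\notin S$, by a generic perturbation. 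Thus it suffices to show that (a) holds if and only if the strict system $\{\,d_{ij}>0\text{ for all interior }(i,j)\,\}$ in the free variables $(e_{uv})_{(u,v)\notin S}$ (with $e_{uv}:=0$ when $(u,v)\in S$) is solvable.

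Next I would dualize. Each $d_{ij}=e_{i+1,j+1}-e_{i+1,j}-e_{i,j+1}+e_{i,j}$ is a fixed linear functional of the free variables, so by a theorem of the alternative (Gordan/Stiemke) the above strict system is \emph{unsolvable} if and only if there is a nonzero $\lambda=(\lambda_{ij})\ge0$ indexed by the interior cells such that $\sum_{ij}\lambda_{ij}d_{ij}$ has vanishing coefficient on every free variable $e_{uv}$. Extending $\lambda$ by $0$ to $\{0,\dots,m\}\times\{0,\dots,n\}$ and collecting coefficients, this reads
\[
\lambda_{u-1,v-1}-\lambda_{u-1,v}-\lambda_{u,v-1}+\lambda_{u,v}=0\qquad\text{for every }(u,v)\notin S.
\]
Call a nonzero $\lambda\ge0$ with this property a \emph{certificate} (Farkas lets us take it rational, hence, after clearing denominators, integral).

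The heart of the argument is then to identify certificates with positive collections of orthogonal cycles of $M$. To a collection $\mathcal O=\{C_a\}_{a\in A}$ of orthogonal cycles of $M$ (allowing a cycle to be repeated) attach the integer-valued signed measure $\mu_{\mathcal O}=\sum_{a}\big(\sum_{0<l\le k_a}\delta_{p_{2l}}-\sum_{0<l\le k_a}\delta_{p_{2l-1}}\big)$, where $2k_a$ is the length of $C_a$; it is supported on $S$ and satisfies $\sum_a\mathfrak C_a(i,j)=\mu_{\mathcal O}(P(i,j))$ by the definition of $\mathfrak C$. Two facts then do the work. (i) $\mu_{\mathcal O}$ has zero marginal along every row and every column: each row move $p_{2l}p_{2l+1}$ contributes one $+\delta$ and one $-\delta$ in the same row, each column move one $+\delta$ and one $-\delta$ in the same column, and every visited position lies on exactly one row move and one column move of its cycle; hence the tail-sum function $\Lambda(i,j):=\mu_{\mathcal O}(P(i,j))$ vanishes on the boundary ($i\in\{0,m\}$ or $j\in\{0,n\}$) and recovers $\mu_{\mathcal O}$ as its mixed second difference. (ii) Regarding $S$ as the edge set of the bipartite ``position graph'' $G^{*}$ whose two vertex classes are the rows and the columns of $M$, an orthogonal cycle of $M$ is precisely a closed walk of $G^{*}$ (the alternation being automatic from bipartiteness), and $\mu_C$ is its alternating $\pm1$ indicator (a column-to-row step counting $+1$, a row-to-column step $-1$); since $G^{*}$ is bipartite, the lattice of integer edge-vectors of $G^{*}$ with zero sum at every vertex is exactly the $\mathbb Z$-span of these alternating cycle indicators (the cycle space of $G^{*}$), and this family is closed under negation (reverse the cycle). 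Putting (i) and (ii) together: a certificate $\lambda$, scaled to be integral, is a zero-marginal integer edge-vector of $G^{*}$ supported on $S$ (that is exactly the displayed equation), hence equals $\Lambda$ for some integral collection $\mathcal O$ of orthogonal cycles of $M$ with $\sum_a\mathfrak C_a(i,j)=\lambda_{ij}\ge0$ everywhere and $>0$ somewhere because $\lambda\neq0$; conversely the $\Lambda$ of any positive collection is a certificate. So \emph{a certificate exists if and only if a positive collection of orthogonal cycles of $M$ exists}, i.e.\ if and only if (a) fails.

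Finally I would record the direct implication (b) $\Rightarrow$ (a) as a check, via summation by parts: for any collection $\mathcal O$ and any $E$ one has $\sum_{u,v}\mu_{\mathcal O}(u,v)\,e_{uv}=\sum_{i,j}\big(\sum_a\mathfrak C_a(i,j)\big)d_{ij}$, the boundary terms dropping out because the tail-sum function vanishes on the boundary. If (b) holds the left side is $0$ (as $e$ vanishes on $S\supseteq\mathrm{supp}\,\mu_{\mathcal O}$), whereas a positive $\mathcal O$ would make the right side strictly positive; so no positive collection exists. Combined with the previous paragraphs this yields (a) $\iff$ (b). I expect the main obstacle to be the bookkeeping in the correspondence of the third paragraph: pinning down that orthogonal cycles of $M$ are exactly the closed walks of $G^{*}$ and that $\mu_C$ is the corresponding alternating indicator, then invoking the cycle-space description of zero-vertex-sum integer vectors on a bipartite graph (bipartiteness being what lets an unsigned zero-sum constraint be read as a signed circulation) — together with making sure the Farkas certificate can be taken integral and that the $\lambda\leftrightarrow\mathfrak C$ dictionary is exact on the boundary of the grid.
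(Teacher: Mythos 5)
The paper does not actually prove this statement: Theorem~\ref{Pachzero} is imported verbatim as ``part of Theorem 4'' of \cite{Pach}, so there is no in-paper proof to compare against; your proposal must be judged on its own, and it holds up. It is also, in spirit, the same duality argument that Pach and Tardos use, not a genuinely different route. The three load-bearing steps are all sound: (1) dropping the nondegeneracy condition $e_{uv}\neq 0$ off $S$ is legitimate, since those conditions cut out dense open subsets of the relatively open set of solutions of the strict system; (2) Gordan's theorem converts unsolvability of $\{d_{ij}>0\}$ into a nonzero nonnegative $\lambda$ whose mixed second difference $\nu$ is supported on $S$, and the tail-sum bijection $\lambda\leftrightarrow\nu$ correctly reproduces the vanishing of $\sum_a\mathfrak{C}_a$ on the boundary of the grid, so positivity over all $(i,j)$ reduces to positivity over interior cells; (3) with every edge of the row-column bipartite graph $G^{*}$ oriented from its row to its column, the unsigned zero-vertex-sum condition becomes the circulation condition, and the lattice of integer circulations is generated by circuit indicators (fundamental cycle basis), which are exactly the $\mu_C$ of orthogonal cycles of $M$. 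The one spot where your own ``bookkeeping'' warning is genuinely needed is closure under negation: the literal reversal of $C=(p_0,\ldots,p_{2k})$ begins with a column move and so is not an orthogonal cycle in the paper's normalization; you must reverse and cyclically shift, taking $(p_{2k-1},p_{2k-2},\ldots,p_1,p_0,p_{2k-1})$, which is an orthogonal cycle of $M$ with measure $-\mu_C$. With that noted, every $\mathbb{Z}$-combination of circuit indicators is realized by an honest collection of orthogonal cycles of $M$, and the chain (b) $\iff$ no certificate $\iff$ no positive collection $=$ (a) closes correctly; your final summation-by-parts check of (b) $\Rightarrow$ (a) is also valid.
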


For a zero-one matrix $M$, the \emph{weight} of $M$ is the number of $1$'s that appear as entries. From \cite[Theorem 3]{Pach}, we have the following asymptotic result.

\begin{theorem}\label{Pachone}
The maximum weight of an $n\textrm{-by-}n$ zero-one matrix containing no positive orthogonal cycle is $O(n^{4/3})$.
\end{theorem}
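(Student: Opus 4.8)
The statement is \cite[Theorem~3]{Pach}; here is the approach I would take. The plan is to move from the forbidden-cycle hypothesis to the structural description in Theorem~\ref{Pachzero}, reinterpret the entries that must be counted as incidences between points and pseudolines, and then invoke the Szemer\'edi--Trotter theorem. By Theorem~\ref{Pachzero}, a zero-one matrix $M=[m_{ij}]$ admitting no positive collection of orthogonal cycles is exactly one of type (b): there is a real matrix $E=[e_{ij}]$ with $m_{ij}=1$ iff $e_{ij}=0$, and with every discrete mixed second difference $d_{ij}=e_{i+1,j+1}-e_{i+1,j}-e_{i,j+1}+e_{i,j}$ strictly positive. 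I would first invoke (from \cite{Pach}) that for these matrices, forbidding a single positive orthogonal cycle and forbidding a positive collection of orthogonal cycles coincide, so that (b) applies. The problem then becomes: a real $n\textrm{-by-}n$ matrix with all mixed second differences positive has at most $O(n^{4/3})$ zero entries.

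To see the geometry, attach to each column $j$ the $x$-monotone piecewise-linear curve $\gamma_j$ through $(1,e_{1j}),(2,e_{2j}),\dots,(n,e_{nj})$, extended to the whole line by continuing along the lines carrying its first and last segments. The heart of the argument is that $\{\gamma_1,\dots,\gamma_n\}$ is an arrangement of pseudolines. Indeed, for columns $j<j'$ and each $i$,
\[
\bigl(e_{i+1,j'}-e_{i+1,j}\bigr)-\bigl(e_{i,j'}-e_{i,j}\bigr)=\sum_{k=j}^{j'-1}d_{ik}>0,
\]
with the analogous positivity for the slopes of the two linear extensions; hence the vertical difference $\gamma_{j'}-\gamma_j$ is a continuous, strictly increasing function on $\mathbb{R}$, so it vanishes at most once and the two curves meet in at most one point (and are in particular distinct). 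Since $e_{ij}=0$ precisely when $\gamma_j$ passes through the point $q_i:=(i,0)$, the number of zero entries of $E$ equals the number of incidences between the $n$ collinear points $q_1,\dots,q_n$ and the $n$ pseudolines $\gamma_1,\dots,\gamma_n$.

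It then remains to apply the Szemer\'edi--Trotter theorem in its form for points and pseudolines (equivalently, the Pach--Sharir incidence bound for families of curves with two degrees of freedom): the number of such incidences is $O\bigl(n^{2/3}n^{2/3}+n+n\bigr)=O(n^{4/3})$, which is the assertion.

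The step I expect to be the main obstacle is the construction in the second paragraph --- manufacturing a genuine pseudoline arrangement out of condition (b), and checking that the degeneracies of the resulting picture (all $n$ points lying on one line; ``pseudolines'' obtained by extending finite polygonal arcs) do not obstruct the incidence bound. A secondary, more technical, point is the equivalence invoked in the first paragraph between ``no positive orthogonal cycle'' and the ``no positive collection of orthogonal cycles'' hypothesis of Theorem~\ref{Pachzero}.
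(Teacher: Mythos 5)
There is nothing in the paper to compare against here: Theorem~\ref{Pachone} is imported verbatim from \cite[Theorem 3]{Pach}, and the paper offers no proof of it. Your proposal is, in substance, a correct reconstruction of the argument in that source. The geometric core checks out: condition (b) makes the vertical difference of any two column-curves strictly increasing (your telescoping identity is right), so the $n$ extended polygonal curves pairwise cross at most once and are pairwise distinct; the weight of $M$ then equals the number of incidences between these $n$ curves and the $n$ points $(i,0)$; and the incidence bound in its pseudoline/curve form (e.g.\ via Sz\'ekely's crossing-number proof, which needs only that two curves meet in at most one point and is indifferent to the points being collinear) gives $O(n^{4/3})$.

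The one step you cannot get from the paper itself is the first one. Theorem~\ref{Pachzero} as quoted equates condition (b) with ``no \emph{collection} of orthogonal cycles is positive,'' which is a priori a stronger hypothesis than the ``no positive orthogonal cycle'' of Theorem~\ref{Pachone}: a single positive cycle is a positive collection, but a matrix with no positive cycle could conceivably still admit a positive collection, and it is the larger class that must be bounded. You correctly flag this and propose to import the equivalence from \cite{Pach}; that equivalence is precisely the part of their Theorem~4 that this paper chose not to quote, and it is a genuine theorem rather than a formal triviality, so in a self-contained write-up it would be the remaining debt. Given that the statement itself is cited rather than proved here, your reconstruction is strictly more informative than the paper, and modulo that one imported equivalence it is correct.
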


Finally, the multiplicity of an entry $a$ in the matrix $A$ is the number of occurrences of $a$ in $A$. For example, let $A=\left(
                                                                                                                             \begin{array}{ccc}
                                                                                                                                1& 2 & 3 \\
                                                                                                                                5& 2 & 4 \\
                                                                                                                                2& 7 & 2 \\
                                                                                                                             \end{array}
                                                                                                                           \right)$. Then the multiplicity of 2 in $A$ is 4, and the multiplicity of 8 in $A$ is 0. When we say: "the $k$-th smallest entry of a real matrix", we mean an entry for which there exist exactly $k-1$ different entries that are strictly smaller than this entry. For example, in the matrix $A$ above, the 1-st smallest entry is 1, the 2-nd smallest entry is 2, and the 3-rd smallest entry is 3.

\section{The totally nonsingular case}
In this section, we asymptotically bound the number of equal entries in a totally nonsingular $n\textrm{-by-}n$ matrix. We recall the following classical fact (which is an immediate consequence of, for example, Lemma 2.1 in \cite{Alon}).
\begin{lemma}\label{nalontwo} Let $P=P(x_1,\ldots,x_n)$ be a polynomial over $\mathbb{R}$.  Suppose there is some ball $B = B(c,\delta) \subset \mathbb{R}^n$ such that for all $x \in B$, $P(x) = 0$. Then $P \equiv 0$.\end{lemma}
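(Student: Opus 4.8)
The statement to prove is Lemma 3.2: if $P = P(x_1,\ldots,x_n)$ is a real polynomial that vanishes identically on some ball $B(c,\delta)$, then $P \equiv 0$. The natural strategy is to reduce to the classical fact that a nonzero univariate polynomial has only finitely many roots, and then bootstrap to several variables by induction on $n$. The hint that this follows from Lemma 2.1 in \cite{Alon} (the Combinatorial Nullstellensatz) suggests an alternative: apply that lemma with the finite sets $S_i$ taken to be any collection of more than $\deg P$ equally spaced points inside the interval $(c_i - \delta/\sqrt n,\, c_i + \delta/\sqrt n)$; since the resulting grid lies in $B$, $P$ vanishes on it, and the Nullstellensatz forces $P \equiv 0$ provided each $|S_i| > \deg_{x_i} P$. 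I will present the clean self-contained induction, which is shorter than invoking the Nullstellensatz machinery.

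\begin{proof}
We argue by induction on $n$. For $n = 1$, a nonzero real polynomial in one variable has only finitely many roots, so it cannot vanish on an interval; hence $P \equiv 0$.

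Suppose the claim holds for $n - 1$ variables, and let $P(x_1,\ldots,x_n)$ vanish on a ball $B = B(c,\delta) \subset \mathbb{R}^n$. Write $c = (c_1,\ldots,c_n)$ and expand $P$ in powers of $x_n$:
\begin{align*}
P(x_1,\ldots,x_n) = \sum_{j=0}^{d} Q_j(x_1,\ldots,x_{n-1})\, x_n^{j},
\end{align*}
where $d = \deg_{x_n} P$ and each $Q_j$ is a polynomial in $x_1,\ldots,x_{n-1}$. Fix any point $y = (y_1,\ldots,y_{n-1})$ in the $(n-1)$-dimensional ball $B' = B((c_1,\ldots,c_{n-1}), \delta/2) \subset \mathbb{R}^{n-1}$. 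Then for every $x_n$ with $|x_n - c_n| < \delta/2$, the point $(y_1,\ldots,y_{n-1},x_n)$ lies in $B$, so the univariate polynomial $t \mapsto P(y_1,\ldots,y_{n-1},t)$ vanishes on the interval $(c_n - \delta/2, c_n + \delta/2)$. By the $n=1$ case, this univariate polynomial is identically zero, so $Q_j(y) = 0$ for every $j$. Since $y \in B'$ was arbitrary, each $Q_j$ vanishes on the ball $B'$, and the inductive hypothesis gives $Q_j \equiv 0$ for all $j$. Therefore $P \equiv 0$, completing the induction.
\end{proof}

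\medskip

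\noindent\textbf{Where the work is.} There is essentially no obstacle here; the only point requiring a moment's care is the reduction from the $n$-dimensional ball to a product of lower-dimensional data, namely the observation that fixing the first $n-1$ coordinates inside a slightly smaller ball still leaves an honest interval of admissible values for $x_n$ (this is why I shrink to $B'$ of radius $\delta/2$ and restrict $x_n$ to $|x_n - c_n| < \delta/2$). Everything else is the familiar fact that a nonzero one-variable polynomial has finitely many roots, packaged through an induction on the number of variables. If one instead prefers to cite \cite{Alon} directly as the paper does, the single step is to exhibit a grid $\prod_{i=1}^n S_i \subset B$ with $|S_i| > \deg_{x_i} P$ and apply the Combinatorial Nullstellensatz, which is immediate.
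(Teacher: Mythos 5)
Your proof is correct. The paper itself does not prove this lemma at all: it is stated as a ``classical fact'' and dispatched with a citation to Lemma 2.1 of Alon's Combinatorial Nullstellensatz paper, so your self-contained induction on the number of variables is a genuinely different (and more elementary) route. The argument is sound: the base case is the finiteness of roots of a nonzero univariate polynomial, and in the inductive step your shrinkage to the radius-$\delta/2$ ball $B'$ correctly guarantees that $(y_1,\ldots,y_{n-1},x_n)$ stays in $B$ for $|x_n-c_n|<\delta/2$, so each coefficient $Q_j$ vanishes on $B'$ and the induction closes. Your alternative sketch via the Nullstellensatz --- placing a grid $\prod_i S_i$ with $|S_i|>\deg_{x_i}P$ inside the ball --- is essentially what the paper's citation amounts to, so you have in effect supplied both the paper's implicit argument and a fully elementary replacement for it; the only thing the citation buys the paper is brevity, while your induction buys self-containedness and avoids importing any machinery.
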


Using this lemma, we prove the following:
\begin{lemma} \label{polys}
Let $\varepsilon >0$ and let $\lbrace p_1,\ldots,p_N\rbrace$ be a finite family of real non-zero polynomials in $k$ variables. Then there exists a ball $B = B(c,\delta) \subset B(0,\varepsilon) \subset \mathbb{R}^k$ (a ball with center in $c$ and radius $\delta$) such that for all $x \in B$ and $1 \leq i \leq N$ we have $p_i(x) \neq 0$.\end{lemma}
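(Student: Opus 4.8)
The plan is to argue by induction on $N$, using Lemma~\ref{nalontwo} at each step to avoid the zero sets of the polynomials one at a time. First I would handle the base case $N=1$: since $p_1$ is a non-zero polynomial, its zero set cannot contain any open ball in $\mathbb{R}^k$ (this is exactly the contrapositive of Lemma~\ref{nalontwo}). Hence inside $B(0,\varepsilon)$ there is some point $c_1$ with $p_1(c_1)\neq 0$, and by continuity of $p_1$ there is a radius $\delta_1>0$ with $B(c_1,\delta_1)\subset B(0,\varepsilon)$ on which $p_1$ never vanishes.

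For the inductive step, suppose we have found a ball $B' = B(c',\delta')\subset B(0,\varepsilon)$ on which $p_1,\ldots,p_{N-1}$ are all non-vanishing. Now apply the same reasoning to $p_N$ but with the ambient ball $B(0,\varepsilon)$ replaced by $B'$: since $p_N\not\equiv 0$, its zero set does not contain $B'$, so there is a point $c\in B'$ with $p_N(c)\neq 0$, and by continuity a sufficiently small ball $B=B(c,\delta)\subset B'$ on which $p_N$ is non-vanishing. Since $B\subset B'$, all of $p_1,\ldots,p_{N-1}$ remain non-vanishing on $B$ as well, and $B\subset B'\subset B(0,\varepsilon)$, completing the induction.

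Alternatively, and perhaps more cleanly, I would avoid the induction altogether by working with the single polynomial $P := p_1 p_2 \cdots p_N$. A product of non-zero polynomials over a field (in particular over $\mathbb{R}$, which is an integral domain) is again non-zero, so by Lemma~\ref{nalontwo} the zero set of $P$ contains no ball; pick $c\in B(0,\varepsilon)$ with $P(c)\neq 0$, which forces $p_i(c)\neq 0$ for every $i$, and then use continuity of $P$ (equivalently, of each $p_i$) to find $\delta>0$ with $B(c,\delta)\subset B(0,\varepsilon)$ and $P$ nowhere zero on $B(c,\delta)$, whence each $p_i$ is nowhere zero there.

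There is no real obstacle here; the statement is essentially a packaging of Lemma~\ref{nalontwo} together with the openness of the complement of a zero set. The only point requiring a modicum of care is ensuring the final ball is genuinely contained in $B(0,\varepsilon)$, which is automatic once we choose $c$ strictly inside $B(0,\varepsilon)$ and then shrink $\delta$ below both the distance from $c$ to the boundary sphere and the radius guaranteed by continuity; I would spell this out in one line.
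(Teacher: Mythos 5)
Your proposal is correct, and your second ("cleaner") argument via the product $P=p_1\cdots p_N$ is essentially the paper's own proof: the paper argues by contradiction that if every point of $B(0,\varepsilon)$ were a zero of some $p_i$, then $\prod_i p_i$ (non-zero since $\mathbb{R}[x_1,\ldots,x_k]$ is an integral domain) would vanish on a ball, contradicting Lemma~\ref{nalontwo}, and then invokes continuity exactly as you do. Your inductive first version is just an unrolled variant of the same idea and is also fine.
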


\begin{proof}
By continuity, it is enough to show that there exists $x \in B(0,\varepsilon)$ such that $p_i(x) \neq 0$ for all $1 \leq i \leq N$. Assume by contradiction that it is not true, and let $p=\prod_{i=1}^{N}p_i$. Then $p(x)=0$ for all $x \in B(0,\varepsilon)$, and hence from Lemma~\ref{nalontwo}  we get a contradiction.
\end{proof}

In \cite{Furedi} is the following result.

\begin{theorem}\label{Hajnal}
Let $M$ be an $n\textrm{-by-}n$ zero-one matrix with no $2\textrm{-by-}2$ submatrix consisting only of ones.  Then the maximal number of ones that $M$ can have is $\Theta(n^{3/2})$.
\end{theorem}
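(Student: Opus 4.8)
The plan is to establish the two matching bounds separately; the upper bound is a Zarankiewicz-type double count (the K\H{o}v\'ari--S\'os--Tur\'an argument) and the lower bound is an explicit incidence construction. For the upper bound, let $c_1,\ldots,c_n$ be the column sums of $M$, let $w=\sum_{j=1}^n c_j$ be its weight, and for each column $j$ let $S_j\subseteq\{1,\ldots,n\}$ be the set of rows in which that column has a $1$, so $|S_j|=c_j$. The forbidden-submatrix hypothesis says exactly that no two rows both carry a $1$ in two common columns; equivalently, each unordered pair of rows is contained in $S_j$ for at most one index $j$. Counting pairs of rows that lie in a common column therefore gives
\begin{equation*}
\sum_{j=1}^n \binom{c_j}{2}\ \le\ \binom{n}{2}.
\end{equation*}
Since $t\mapsto\binom{t}{2}$ is convex, the left-hand side is at least $n\binom{w/n}{2}$, so $(w/n)(w/n-1)\le n-1$, whence $w/n\le\tfrac12\bigl(1+\sqrt{4n-3}\bigr)$ and $w=O(n^{3/2})$.

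For the lower bound it suffices to exhibit matrices with $\Omega(n^{3/2})$ ones and no $2$-by-$2$ all-ones block. Fix a prime $p$ and index both the rows and the columns by $\mathbb{F}_p\times\mathbb{F}_p$, so the matrix has size $p^2$-by-$p^2$; put a $1$ in the position labelled $\bigl((a,b),(c,d)\bigr)$ precisely when $d=ac+b$ (read the rows as lines $y=ax+b$ and the columns as points $(c,d)$). For a fixed row $(a,b)$ and each $c\in\mathbb{F}_p$ there is exactly one $d$ with $d=ac+b$, so every row sum equals $p$ and the weight is $p^2\cdot p=p^3=(p^2)^{3/2}$. If two distinct rows $(a,b)$ and $(a',b')$ share a $1$ in column $(c,d)$ then $(a-a')c=b'-b$; when $a\ne a'$ this determines $c$, hence $d$, uniquely, and when $a=a'$ it is impossible, so any two rows have at most one common column and $M$ contains no $2$-by-$2$ all-ones submatrix.

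To handle an arbitrary $n$, let $p$ be the largest prime with $p^2\le n$; by Bertrand's postulate $p=\Omega(\sqrt n)$ for $n$ large (and the bound $\Omega(n^{3/2})$ is vacuous for the finitely many small $n$). Place the $p^2$-by-$p^2$ construction in the top-left corner of the $n$-by-$n$ zero matrix, leaving the remaining entries $0$. The result is $n$-by-$n$, still contains no $2$-by-$2$ all-ones block, and has weight $p^3=\Omega(n^{3/2})$; combined with the upper bound this yields $\Theta(n^{3/2})$.

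I expect the only non-routine ingredient to be the lower bound: one needs an explicit incidence structure realizing the extremal rate of growth---here the affine point--line incidences over $\mathbb{F}_p$, equivalently a truncated projective plane---together with a prime of size comparable to $\sqrt n$ so that, for $n$ not of the special form $p^2$, the construction is not wasteful. Bertrand's postulate is far more than enough to keep the loss to a constant factor, which is all that $\Theta(n^{3/2})$ requires; by contrast the upper bound is essentially a one-line double count whose only subtlety is the convexity step reducing to the case of nearly equal column sums.
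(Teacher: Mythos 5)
Your proof is correct and complete. Note that the paper itself offers no proof of this statement---it is quoted directly from F\"uredi and Hajnal---so there is no in-paper argument to compare against; what you have supplied is the classical resolution of the Zarankiewicz problem $z(n;2,2)$, which is exactly the right thing here. The upper bound via $\sum_j \binom{c_j}{2} \le \binom{n}{2}$ and convexity is the K\H{o}v\'ari--S\'os--Tur\'an double count, and the lower bound via point--line incidences over $\mathbb{F}_p$ (every row sum equal to $p$, two distinct lines meeting in at most one point, hence no $2$-by-$2$ all-ones block), padded to arbitrary $n$ with Bertrand's postulate, is Reiman's construction; both halves are carried out without gaps. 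It is worth observing that your lower-bound construction is the finite-field analogue of the real point--line incidence configurations the paper uses later (Theorem 4.3 and Lemma 4.4) for the $\Theta(n^{4/3})$ totally positive bound, so the two sections are more closely related than the paper's citation-only treatment suggests.
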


This theorem has important consequences for the number of equal entries in a totally nonsingular matrix. In order to prove it, we start with the following:

\begin{theorem}\label{tnscharacterization}
The following statements about an $n\textrm{-by-}n$ zero-one matrix $M$ are equivalent
\begin{enumerate}
\item[(f)] $M$ has no $2\textrm{-by-}2$ submatrix consisting only of ones.
\item[(g)] For any real number $b \neq 0$ there exists a totally nonsingular matrix $A$ such that $C_b(A) = M$.
\end{enumerate}
\end{theorem}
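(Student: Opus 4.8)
The plan is to prove the two implications separately. The direction $(g) \implies (f)$ is the easy one: if $M$ had a $2\textrm{-by-}2$ all-ones submatrix, then any $A$ with $C_b(A) = M$ would have a $2\textrm{-by-}2$ submatrix with all entries equal to $b$, hence with zero determinant, contradicting total nonsingularity. (Here one uses $b \neq 0$ only to make the statement non-vacuous; the $2\textrm{-by-}2$ minor argument works regardless.)

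For the substantive direction $(f) \implies (g)$, the idea is to treat the $0$ positions of $M$ as free real parameters and the $1$ positions as fixed equal to $b$. First I would set up the parametrization: let $k$ be the number of $0$'s in $M$, and let $x = (x_1,\dots,x_k) \in \mathbb{R}^k$ index the entries in those positions, so that each matrix $A(x)$ with $C_b(A(x))=M$ (and whose $0$-position entries are distinct from $b$, which is an open dense condition) is obtained by placing $b + $ a small shift, or simply an independent coordinate, in each $0$ position. Each minor of $A(x)$ is a polynomial in $x$. The key claim is that \emph{every} minor of $A(x)$ is a non-zero polynomial in $x$. Granting this claim, there are finitely many minors $p_1,\dots,p_N$, and by Lemma~\ref{polys} there is a point $x$ (in fact a whole ball, and we can take it inside $B(b\mathbf{1},\varepsilon)$ to also keep the $0$-entries away from $b$) at which all of them are simultaneously non-zero; the corresponding matrix $A$ is totally nonsingular and satisfies $C_b(A) = M$.

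So the heart of the matter, and the step I expect to be the main obstacle, is showing that each minor polynomial is not identically zero. Fix a square submatrix $A'$ of $A(x)$ on rows $R$ and columns $S$ with $|R| = |S| = t$. I want to exhibit a single specialization of the variables making $\det A' \neq 0$. The obstruction to the naive approach — specialize the free entries to make $A'$ (anti-)diagonal — is that the $1$-positions of $M$ inside $A'$ are forced to equal $b$ and cannot be moved. The way around this is the no-$2\textrm{-by-}2$-all-ones hypothesis: in the submatrix $M[R,S]$, no two rows share two common $1$-columns, so one can find a system of distinct representatives argument. Concretely, I would argue by induction on $t$, or directly, that one can pick a permutation $\sigma$ of $S$ such that the generalized diagonal $\{(i,\sigma(i))\}$ uses each free ($0$-)position at most once and is otherwise consistent; then specialize each free variable on that diagonal to a large value $T$ and all other free variables to $0$. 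The determinant becomes, up to sign, a polynomial in $T$ whose top-degree term comes from the chosen diagonal and has a non-zero coefficient (a product of $T$'s and $b$'s), so $\det A'$ is a non-zero polynomial. Making the combinatorial selection of such a diagonal rigorous — essentially a Hall-type / König-type argument exploiting that $M$ has no $2\textrm{-by-}2$ block of $1$'s, so that the $1$'s are "spread out" enough — is the delicate part; everything else is the routine density argument via Lemma~\ref{polys}.
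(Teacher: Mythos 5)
Your overall architecture is the same as the paper's: fix the $1$-positions of $M$ at $b$, place independent indeterminates in the $0$-positions, note that each minor is a polynomial in these indeterminates, and invoke Lemma~\ref{polys} to find a point where all of them are simultaneously non-zero; the direction $(g)\Rightarrow(f)$ is the same $2\textrm{-by-}2$ minor observation in both. You have also correctly located the only non-trivial step, namely that \emph{every} minor of the variable matrix is a non-zero polynomial (a claim the paper in fact asserts without proof). But your sketch of that step has a genuine gap, and not only at the point you flag as ``delicate.'' After specializing the free entries on your chosen diagonal to $T$ and all other free entries to $0$, the coefficient of the top power $T^{d}$ is \emph{not} contributed by your permutation alone: it equals $\pm\, b^{\,t-d}$ times the signed sum over all permutations that agree with yours on the $d$ variable positions and pass through $1$-positions elsewhere, i.e.\ $\pm\, b^{\,t-d}\det M''$ for the complementary zero-one submatrix $M''$ of $M$. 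Existence of a system of distinct representatives does not make this non-zero. For instance,
\[
\begin{pmatrix}1&1&0&0\\0&0&1&1\\1&0&1&0\\0&1&0&1\end{pmatrix}
\]
has no $2\textrm{-by-}2$ all-ones submatrix and admits a permutation through its ones, yet is singular (the first two rows and the last two rows have equal sums). So a Hall/K\"onig-type selection cannot by itself certify a non-vanishing leading coefficient; the diagonal would have to be chosen so that a complementary $0$--$1$ minor is non-zero, which is essentially the statement you are trying to prove.

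The repair is simpler than what you propose: induct on the order $t$ of the submatrix and expand in a \emph{single} variable rather than along a whole diagonal. For $t=1$ the minor is $b\neq 0$ or a lone indeterminate, hence a non-zero polynomial. For $t\ge 2$, hypothesis (f) forces the corresponding submatrix of $M$ to contain a $0$, so the submatrix of $A$ contains some variable $x_{ij}$; the determinant is affine-linear in $x_{ij}$, and its linear coefficient is $\pm$ the complementary $(t-1)\textrm{-by-}(t-1)$ minor, which is a non-zero polynomial by induction. Hence the determinant is a non-zero polynomial. With this substitution (and your correct remark that one should also keep the free entries away from $b$, e.g.\ by adjoining the polynomials $x_{ij}-b$ to the family fed to Lemma~\ref{polys}), your proof closes and coincides with the paper's.
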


\begin{proof}
We start from $(f) \implies (g)$, and note that it is enough to prove the statement for $b=1$. Define $A = [a_{ij}]$ to be a matrix of variables
\begin{align*}
a_{ij} = \left\{\begin{array}{ll}
x_{ij}, &m_{ij} = 0 \\
1, & m_{ij} = 1
\end{array}\right. .
\end{align*}
The minors of $A$ determine a collection of nonzero polynomials, and by Lemma~\ref{polys}, one can choose variables $\big\{x_{ij}\big\}$ such that $A$ would be totally nonsingular matrix with $C_1(A) = M$.\\
For $(g) \implies (f)$, It is clear that if $M$ has a $2\textrm{-by-}2$ submatrix of ones, then the corresponding minor of $A$ would be zero, a contradiction.
\end{proof}

As a corollary to Theorems~\ref{Hajnal} and~\ref{tnscharacterization} we obtain the following:
\begin{corollary}
The maximal number of equal entries in a totally nonsingular $n\textrm{-by-}n$ matrix is $\Theta(n^{3/2})$.
\end{corollary}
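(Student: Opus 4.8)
The plan is to deduce the corollary directly from Theorems~\ref{Hajnal} and~\ref{tnscharacterization}, handling the upper and lower asymptotic bounds separately. For the upper bound I would start from an arbitrary $n\textrm{-by-}n$ totally nonsingular matrix $A$ and let $b$ be an entry of maximum multiplicity. Since every entry of $A$ is itself a $1\textrm{-by-}1$ minor and hence nonzero, we have $b \neq 0$, so the configuration $M = C_b(A)$ is a well-defined zero-one matrix whose weight equals the multiplicity of $b$. The key observation is that $M$ can have no $2\textrm{-by-}2$ submatrix consisting only of ones: such a submatrix would correspond to a $2\textrm{-by-}2$ submatrix of $A$ all of whose entries equal $b$, whose determinant is $b^2 - b^2 = 0$, contradicting total nonsingularity. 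Thus $M$ satisfies property (f), and Theorem~\ref{Hajnal} shows that its weight is $O(n^{3/2})$; since $b$ was chosen of maximum multiplicity, the number of equal entries in $A$ is $O(n^{3/2})$.

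For the lower bound I would invoke Theorem~\ref{Hajnal} in the other direction to obtain, for each $n$, an $n\textrm{-by-}n$ zero-one matrix $M$ with no $2\textrm{-by-}2$ all-ones submatrix and with $\Omega(n^{3/2})$ ones. By the implication $(f) \implies (g)$ of Theorem~\ref{tnscharacterization}, applied with $b = 1$, there is a totally nonsingular matrix $A$ with $C_1(A) = M$, and this matrix has $\Omega(n^{3/2})$ entries equal to $1$. Combining the two bounds gives that the maximal number of equal entries in an $n\textrm{-by-}n$ totally nonsingular matrix is $\Theta(n^{3/2})$.

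I do not expect a genuine obstacle here, since the corollary is essentially a repackaging of the two cited theorems once one observes that "maximal number of equal entries" means the maximum over $b$ of the weight of $C_b(A)$, and that total nonsingularity forces both $b \neq 0$ and the absence of a $2\textrm{-by-}2$ all-ones configuration. The only points deserving a word of care are that the upper bound must be taken over \emph{all} values $b$ rather than a fixed one, and that the lower-bound construction relies on the nontrivial direction of Theorem~\ref{tnscharacterization}, which in turn rests on Lemma~\ref{polys} to choose the free variables avoiding the vanishing loci of all the minors.
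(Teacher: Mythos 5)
Your proposal is correct and is exactly the argument the paper intends (the paper states the corollary without writing out a proof, presenting it as an immediate consequence of Theorems~\ref{Hajnal} and~\ref{tnscharacterization}): the upper bound via the observation that a repeated entry $b\neq 0$ cannot occupy a $2\textrm{-by-}2$ all-$b$ submatrix, and the lower bound via the extremal zero-one matrix together with the implication $(f)\implies(g)$. No issues.
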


\section{The totally positive case}
In this section, we asymptotically bound the number of equal entries in an $n\textrm{-by-}n$ $TP$ matrix.  This bound also applies to the number of equal minors of size $(n-1)\textrm{-by-}(n-1)$. Surprisingly, the bound that we get is noticeably smaller than the bound from the previous section.\\

Let $A \in M_{m,n}(\mathbb{R})$ be entry-wise nonnegative. Let $A^{(t)}$ denote the $t^{th}$ Hadamard power of $A$, that is $A^{(t)} = [a_{ij}^t]$ if $A = [a_{ij}]$.  We say that $A$ is \emph{eventually $TP$} if there exists an $N > 0$ such that for each $t > N$, the matrix $A^{(t)}$ is $TP$. Given in \cite{Fallat} is the following result:

\begin{theorem}\label{thm:TP2}
A matrix $A$ is $TP_2$ if and only if $A$ is eventually $TP$.
\end{theorem}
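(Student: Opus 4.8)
The plan is to prove the two implications of Theorem~\ref{thm:TP2} separately; the direction ``eventually $TP$ $\Rightarrow$ $TP_2$'' is routine, while ``$TP_2$ $\Rightarrow$ eventually $TP$'' is the substance. For the easy direction, suppose $A^{(t)}$ is $TP$ for all $t>N$ and fix one such $t$. Positivity of the $1\textrm{-by-}1$ minors of $A^{(t)}$ forces every entry of $A$ to be positive, and for a $2\textrm{-by-}2$ minor on rows $i<i'$ and columns $j<j'$, positivity of the corresponding minor of $A^{(t)}$ says exactly $(a_{ij}a_{i'j'})^t > (a_{ij'}a_{i'j})^t$; since $t>0$ and $s\mapsto s^t$ is strictly increasing on $(0,\infty)$, this gives $a_{ij}a_{i'j'}>a_{ij'}a_{i'j}$, so $A$ is $TP_2$. (When $\min(m,n)=1$ there are no $2\textrm{-by-}2$ submatrices and the statement is degenerate; I treat that case directly.)

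For the main direction, assume $A$ is $TP_2$, so all entries are positive and $a_{ij}a_{i'j'}>a_{ij'}a_{i'j}$ whenever $i<i'$ and $j<j'$. Fix a $k\textrm{-by-}k$ submatrix on rows $i_1<\cdots<i_k$ and columns $j_1<\cdots<j_k$. Its minor in $A^{(t)}$ equals
\[
D(t) \;=\; \sum_{\sigma\in S_k}\operatorname{sgn}(\sigma)\prod_{p=1}^{k} a_{i_p\,j_{\sigma(p)}}^{\,t} \;=\; \sum_{\sigma\in S_k}\operatorname{sgn}(\sigma)\,e^{\,t\,w(\sigma)},\qquad w(\sigma):=\sum_{p=1}^{k}\log a_{i_p\,j_{\sigma(p)}}.
\]
The key claim is that the identity permutation is the \emph{unique} maximizer of $w$ over $S_k$. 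Granting this, write $w^\ast=w(\operatorname{id})$ and factor $D(t)=e^{\,t w^\ast}\big(1+\sum_{\sigma\neq\operatorname{id}}\operatorname{sgn}(\sigma)e^{\,t(w(\sigma)-w^\ast)}\big)$; every exponent $w(\sigma)-w^\ast$ is strictly negative, so the parenthesized quantity tends to $1$ as $t\to\infty$, hence $D(t)>0$ for all $t$ past some threshold depending on this submatrix. Since $A$ has finitely many submatrices, taking $N$ to be the maximum of these thresholds shows $A^{(t)}$ is $TP$ for $t>N$.

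Proving the claim is the main obstacle, and it is exactly where $TP_2$ is used; the statement amounts to saying that $\log a_{ij}$ is a supermodular (``inverse Monge'') function on the index grid, for which the sorted assignment is optimal. I would argue by an exchange argument: if $\sigma\neq\operatorname{id}$ there is an adjacent descent, i.e.\ an index $p$ with $\sigma(p)>\sigma(p+1)$; let $\sigma'$ be $\sigma$ followed by the transposition of positions $p$ and $p+1$. Then
\[
w(\sigma')-w(\sigma) \;=\; \log a_{i_p\,j_{\sigma(p+1)}} + \log a_{i_{p+1}\,j_{\sigma(p)}} - \log a_{i_p\,j_{\sigma(p)}} - \log a_{i_{p+1}\,j_{\sigma(p+1)}}.
\]
Because $i_p<i_{p+1}$ and $j_{\sigma(p+1)}<j_{\sigma(p)}$, the $TP_2$ inequality gives $a_{i_p\,j_{\sigma(p+1)}}\,a_{i_{p+1}\,j_{\sigma(p)}} > a_{i_p\,j_{\sigma(p)}}\,a_{i_{p+1}\,j_{\sigma(p+1)}}$, so $w(\sigma')>w(\sigma)$, while $\sigma'$ has exactly one fewer inversion than $\sigma$. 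Iterating drives any $\sigma$ to $\operatorname{id}$ along a path of strictly increasing $w$, which proves both that $\operatorname{id}$ is a maximizer and that it is the only one.

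I would close with the remark that the same dominant-term analysis applies verbatim to any fixed family of minors, in particular to all $(n-1)\textrm{-by-}(n-1)$ minors simultaneously, which is the fact used in Section~4 to transfer the equal-entry bound to equal $(n-1)\textrm{-by-}(n-1)$ minors.
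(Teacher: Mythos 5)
Your proof is correct. Note, however, that the paper does not prove this statement at all: it is quoted from \cite{Fallat} as a known result, so there is no internal proof to compare against. Your argument --- the easy direction via monotonicity of $s\mapsto s^t$, and the hard direction via the expansion $D(t)=\sum_\sigma \operatorname{sgn}(\sigma)e^{tw(\sigma)}$ together with the adjacent-transposition exchange argument showing that $TP_2$ makes the identity the unique maximizer of $w$, so that the positive identity term dominates for large $t$ --- is the standard proof of this fact and is carried out correctly, including the uniform choice of $N$ over the finitely many submatrices.
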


Thus, if $A$ is $TP_2$, there exists $t>0$ for which $A^{(t)}$ is $TP$, and for each entry $b$ of $A$, $C_b(A)=C_{b^t}(A^{(t)})$. As a result, the maximal number of equal entries in an $n\textrm{-by-}n$ $TP$ matrix is equal to the maximal number of equal entries in an $n\textrm{-by-}n $ $ TP_2 $ matrix. Our main result in this section is the following:

\begin{theorem}\label{equalTP}
The number of equal entries in an $n\textrm{-by-}n$ totally positive matrix is $O(n^{4/3})$, and for arbitrarily large values of $n$, there exist $n\textrm{-by-}n$ totally positive matrices with $\Omega(n^{4/3})$ equal entries.  Thus, the maximal number of equal entries in an $n\textrm{-by-}n$ totally positive matrix is $\Theta(n^{4/3})$.
\end{theorem}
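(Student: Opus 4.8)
The plan is to prove the two bounds separately. Both rest on a single observation: since a $TP$ (indeed $TP_1$) matrix has positive entries, passing to entrywise logarithms turns the multiplicative $2$-by-$2$ positivity condition into the additive discrete-convexity condition $d_{i,j}>0$ appearing in Theorem~\ref{Pachzero}.

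\textbf{Upper bound.} By Theorem~\ref{thm:TP2} (and the remark preceding this theorem) it suffices to bound the multiplicity of a value in an $n$-by-$n$ $TP_2$ matrix $A=[a_{ij}]$, since Hadamard powers preserve exactly which entries are equal. Fix a value $b$ occurring in $A$, put $M=C_b(A)$, and set $E=[\,\log a_{ij}-\log b\,]$, which is legitimate because $a_{ij}>0$. Then $e_{ij}=0$ precisely when $a_{ij}=b$, so $M$ is obtained from $E$ by replacing each $0$ by $1$ and each nonzero entry by $0$; and $d_{i,j}=e_{i+1,j+1}-e_{i+1,j}-e_{i,j+1}+e_{i,j}=\log(a_{i,j}a_{i+1,j+1})-\log(a_{i,j+1}a_{i+1,j})>0$ exactly because the $2$-by-$2$ minor of $A$ on rows $i,i+1$ and columns $j,j+1$ is positive. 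Thus $E$ witnesses property (b) of Theorem~\ref{Pachzero}, so $M$ contains no positive orthogonal cycle, and Theorem~\ref{Pachone} bounds its weight --- the multiplicity of $b$ in $A$ --- by $O(n^{4/3})$.

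\textbf{Lower bound.} I would start from a near-extremal incidence configuration of $n$ points and $n$ lines in the plane with $\Omega(n^{4/3})$ incidences (the tightness direction of the Szemer\'edi--Trotter incidence bound). Applying a generic projective transformation preserves all incidences while placing the configuration in general position: the points acquire pairwise distinct abscissae and the lines pairwise distinct, finite slopes, with no two lines parallel. Write $P_i=(x_i,y_i)$ with $x_1<\cdots<x_n$ and $\ell_j:\ y=m_jx+c_j$ with $m_1>\cdots>m_n$, and define $A=[a_{ij}]$ by $a_{ij}=\exp(y_i-m_jx_i-c_j)$, the exponential of the signed vertical distance from $P_i$ to $\ell_j$. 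Then $a_{ij}=1$ exactly when $P_i\in\ell_j$, so $A$ has $\Omega(n^{4/3})$ entries equal to $1$ (this is a change of $J$ at the non-incidence positions), and the computation $\log a_{ij}+\log a_{kl}-\log a_{il}-\log a_{kj}=(x_k-x_i)(m_j-m_l)>0$ for all $i<k,\ j<l$ shows that every $2$-by-$2$ minor of $A$ is positive, i.e.\ $A$ is $TP_2$. By Theorem~\ref{thm:TP2} some Hadamard power $A^{(t)}$ is $TP$, and Hadamard powering fixes the entries equal to $1$; hence $A^{(t)}$ is an $n$-by-$n$ $TP$ matrix with $\Omega(n^{4/3})$ equal entries. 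Combined with the upper bound, this gives $\Theta(n^{4/3})$.

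\textbf{Where the difficulty lies.} The upper bound is essentially bookkeeping once the logarithm trick is in hand, so the real work is the lower bound. Two points need care: first, that the analytic matrix built from the geometric data --- here $\exp$ of the signed vertical distance --- genuinely has all $2$-by-$2$ minors positive, and that the correct way to order the data is points by increasing abscissa against lines by decreasing slope (any pair of parallel lines, or any coincidence of abscissae, forces a vanishing minor); second, that the near-extremal Szemer\'edi--Trotter configurations can be put in the required general position without losing more than a constant factor of incidences, so that this ordering actually exists. Verifying these two claims is the crux of the construction.
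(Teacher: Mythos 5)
Your proof is correct and follows essentially the same route as the paper: the upper bound via the entrywise-logarithm reduction to condition (b) and Theorems~\ref{Pachzero} and~\ref{Pachone}, and the lower bound via a near-extremal Szemer\'edi--Trotter configuration, exponentials of signed vertical distances, and Hadamard powering (Theorem~\ref{thm:TP2}). The only cosmetic difference is that you verify the $TP_2$ condition by the identity $\log a_{ij}+\log a_{kl}-\log a_{il}-\log a_{kj}=(x_k-x_i)(m_j-m_l)$, which replaces the paper's three-case geometric argument about where the intersection point of the two lines falls relative to the two points.
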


Before presenting the proof, we recall an important theorem from \cite{Szemeredi}. A point $p = (x_0,y_0)$ is said to be \emph{incident} with a line $L = \{(x,y) : y = mx + b\}$ if $p \in L$.

\begin{theorem}\label{thm:szem}
Given $n$ points and $n$ lines in the plane, the maximal number of point-line incidences is $\Theta(n^{4/3})$.
\end{theorem}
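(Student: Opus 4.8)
The plan is to prove the two directions separately: the upper bound, that $n$ points and $n$ lines determine $O(n^{4/3})$ incidences, and a matching construction giving $\Omega(n^{4/3})$. The upper bound is the substantive half, and I would derive it following Székely from the \emph{crossing number inequality}: if a simple graph $G$ is drawn in the plane with $v$ vertices and $e \ge 4v$ edges, then the number of edge crossings satisfies $\mathrm{cr}(G) \ge e^3/(64 v^2)$. The idea is to encode the incidence structure in such a graph and play the (large) number of forced crossings off against the (small) number of pairwise line intersections.

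For the upper bound, let $P$ be the set of $n$ points and $L$ the set of $n$ lines, and build a drawn graph $G$ whose vertices are the points of $P$: for each line $\ell$ containing $k_\ell \ge 1$ points of $P$, order those points along $\ell$ and join consecutive ones by the segment they span, contributing $k_\ell - 1$ edges lying on $\ell$. Then $v = n$, and writing $I = \sum_\ell k_\ell$ for the total number of incidences, the edge count is $e = \sum_{\ell} (k_\ell - 1) \ge I - n$. Two edges on a common line never cross, while edges on two distinct lines cross at most once since those lines meet at most once; hence $\mathrm{cr}(G) \le \binom{n}{2} < n^2$. If $e \ge 4v$, the crossing inequality yields $n^2 > (I-n)^3/(64 n^2)$, so $I - n = O(n^{4/3})$ and $I = O(n^{4/3})$; if instead $e < 4v = 4n$, then $I - n \le e < 4n$ gives $I < 5n$ outright. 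In both cases $I = O(n^{4/3})$.

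For the lower bound I would exhibit a grid. Fix $k$, take $P = \{1,\dots,k\} \times \{1,\dots,2k^2\}$, and take $L$ to consist of the lines $y = ax + b$ with $a \in \{1,\dots,k\}$ and $b \in \{1,\dots,k^2\}$. For any such line and any $x \in \{1,\dots,k\}$ one checks $2 \le ax + b \le k\cdot k + k^2 = 2k^2$, so each line passes through exactly $k$ grid points, one for each $x$. This gives $|L| = k^3$ lines, $|P| = 2k^3$ points, and exactly $k^4$ incidences. Since $|P|, |L| = \Theta(k^3)$ while $k^4 = \Theta\big((k^3)^{4/3}\big)$, after balancing the two counts to a common value $n = \Theta(k^3)$ (for instance by adjoining $k^3$ further lines disjoint from $P$) this realizes $\Omega(n^{4/3})$ incidences, matching the upper bound.

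The main obstacle, and the real content, is the crossing number inequality itself; everything else is bookkeeping (the edge-case split above and the count-matching in the construction). I would either cite it or prove it in two steps. First, from Euler's formula a simple planar graph has $e \le 3v$, and deleting one edge from each crossing pair planarizes any drawing, so $\mathrm{cr}(G) \ge e - 3v$ for every graph. Second, amplify this linear bound: keep each vertex independently with probability $p = 4v/e \le 1$, apply the linear bound to the induced subgraph, and take expectations, noting that an edge survives with probability $p^2$ and a given crossing with probability $p^4$; this gives $p^4 \mathrm{cr}(G) \ge p^2 e - 3 p v$, and substituting $p = 4v/e$ produces $\mathrm{cr}(G) \ge e^3/(64 v^2)$.
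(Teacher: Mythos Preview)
Your argument is correct. You give Sz\'ekely's crossing-number proof for the upper bound and the standard integer-grid construction for the lower bound; both are carried out accurately, including the edge-case split $e \gtrless 4v$ and the probabilistic amplification of the linear bound $\mathrm{cr}(G) \ge e - 3v$.

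The paper, however, does not prove this theorem at all: it simply quotes it as the classical Szemer\'edi--Trotter theorem, citing \cite{Szemeredi}, and then uses it as a black box (in fact only the lower-bound direction is invoked, inside Lemma~\ref{lemmequalTP}, to guarantee the existence of a point-line configuration with $\Theta(n^{4/3})$ incidences). So your write-up supplies strictly more than what the paper contains here; the trade-off is that the paper keeps the exposition short by outsourcing this well-known result, while your version is self-contained.
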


We now prove the following lemma:
\begin{lemma} \label{lemmequalTP}
For arbitrarily large values of $n$, there exist $n\textrm{-by-}n$ totally positive matrices with $\Omega(n^{4/3})$ equal entries.
\end{lemma}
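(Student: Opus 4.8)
The plan is to realise a rich point-line incidence pattern as the configuration of one fixed value inside a single totally positive matrix. First I would invoke the lower-bound half of Theorem~\ref{thm:szem} to fix, for arbitrarily large $n$, a set of $n$ points $p_i=(x_i,y_i)$ and $n$ lines $\ell_j:y=m_jx+c_j$ in the plane with $I=\Omega(n^{4/3})$ incident pairs $(p_i,\ell_j)$. The target is then an $n\textrm{-by-}n$ matrix $A=[a_{ij}]$ with positive entries such that $a_{ij}=1$ exactly when $p_i\in\ell_j$, so that $A$ has at least $I=\Omega(n^{4/3})$ entries equal to $1$. By Theorem~\ref{thm:TP2} it suffices to produce such an $A$ that is only $TP_2$: some Hadamard power $A^{(t)}$ is then $TP$, and since a positive number $a$ satisfies $a=1$ iff $a^t=1$, the matrix $A^{(t)}$ still has $I$ entries equal to $1$.

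The candidate is built from signed vertical distances. Put $e_{ij}=y_i-m_jx_i-c_j$, so that $e_{ij}=0$ precisely when $p_i\in\ell_j$, and set $a_{ij}=e^{e_{ij}}$, so that $a_{ij}>0$ and $a_{ij}=1$ iff $p_i\in\ell_j$. A direct expansion gives, for all $1\le i,j\le n-1$,
\[
e_{i+1,j+1}-e_{i,j+1}-e_{i+1,j}+e_{ij}=(m_j-m_{j+1})(x_{i+1}-x_i).
\]
Hence, if the points are indexed so that $x_1<x_2<\cdots<x_n$ and the lines so that $m_1>m_2>\cdots>m_n$, every such mixed second difference is strictly positive; the left-hand side telescopes over rectangles, so $e_{ij}+e_{i'j'}>e_{ij'}+e_{i'j}$ for all $i<i'$ and $j<j'$, and exponentiating gives $a_{ij}a_{i'j'}>a_{ij'}a_{i'j}$. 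Thus every $2\textrm{-by-}2$ minor of $A$, and every entry of $A$, is positive, so $A$ is $TP_2$; equivalently, the matrix $E=[e_{ij}]$ exhibits condition (b) of Theorem~\ref{Pachzero} for the incidence matrix $C_0(E)=C_1(A)$. This completes the construction modulo the indexing assumption. (One can even bypass Theorem~\ref{thm:TP2}: since $a_{ij}=e^{y_i}\cdot e^{-x_im_j}\cdot e^{-c_j}$, we have $A=\mathrm{diag}(e^{y_i})\,[e^{-x_im_j}]\,\mathrm{diag}(e^{-c_j})$, a two-sided positive diagonal scaling of $[e^{-x_im_j}]$, which is strictly totally positive because the sequences $(x_i)$ and $(-m_j)$ are both strictly increasing; such scalings preserve total positivity, so $A$ is already $TP$.)

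The main obstacle, I expect, is exactly the indexing assumption: arranging that the points have pairwise distinct abscissae and the lines pairwise distinct, finite slopes. The standard extremal configurations for Theorem~\ref{thm:szem} (a block of the integer lattice together with its incidence-rich lines) genuinely contain many parallel lines and many vertically aligned points, which would make some of the differences above vanish and destroy the $TP_2$ property, so this degeneracy cannot simply be ignored. I would remove it by applying a projective change of coordinates $\tau$ in general position before forming the $e_{ij}$: incidences are preserved by any $\tau$, and each of the finitely many bad events — two image points sharing an $x$-coordinate, two image lines sharing a slope, an image point or line lying on the line at infinity, an image line being vertical — is the vanishing of one nontrivial polynomial in the entries of $\tau$, hence fails for generic $\tau$. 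After such a $\tau$ all the images are genuine affine points and non-vertical affine lines, with distinct abscissae and distinct slopes, and reindexing puts them in the two monotone orders used above. Since Theorem~\ref{thm:szem} provides suitable configurations for arbitrarily large $n$, this proves the lemma; the remaining verifications (the displayed identity, the rectangle telescoping, and that a generic $\tau$ avoids the listed degeneracies) are routine.
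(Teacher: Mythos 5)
Your construction is essentially the paper's: both take an extremal Szemer\'edi--Trotter configuration, put it in general position, form the matrix of signed vertical distances from points to lines, and exponentiate, so that the entries equal to $1$ are exactly the incidences; the argument is correct. Two details are genuinely cleaner in your version. First, where the paper verifies the $TP_2$ inequality $a_{iu}+a_{kj}-a_{ij}-a_{ku}>0$ by a three-case geometric analysis around the intersection point of the two lines, you observe that the mixed second difference is identically $(m_j-m_{j+1})(x_{i+1}-x_i)$, so the monotone orderings of abscissae and slopes do all the work in one line. Second, your parenthetical factorization of $A$ as a two-sided positive diagonal scaling of the strictly totally positive kernel matrix $[e^{-x_i m_j}]$ shows $A$ is already $TP$, which lets you dispense with the appeal to Theorem~\ref{thm:TP2} (Hadamard powering) that the paper uses to pass from $TP_2$ to $TP$; both routes correctly preserve the count of entries equal to $1$. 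Your handling of the degeneracies (distinct abscissae, distinct finite slopes, no vertical lines) by a generic projective map is a legitimate substitute for the paper's citation of \cite{Pach} for the same normalization.
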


\begin{proof}
First, let us take a collection of $n$ points and $n$ lines for which the number of incidences is $\Theta(n^{4/3})$. According to \cite{Pach}, it is possible to apply a transformation on the set of points and lines that preserves the incidences, and such that after applying the transformation no two points would have the same $x$-coordinates, no two lines would have the same slope, and no line is parallel to the $y$-axis. Let $p_1=(x_1,y_1), \ldots, p_n=(x_n,y_n)$ be those $n$ points, such that $x_1 < x_2 < \ldots < x_n$, and let $l_1, \ldots, l_n$ be those $n$ lines (such that the slope $l_a$ is smaller than the slope of $l_b$ for $a < b$). Denote by $a_{ij}$ be the vertical distance from point $p_i$ to line $l_j$. The vertical distance is defined as follows: Let us draw a line that is parallel to the $y$-axis and passes through $p_i$, and let $d_{ij}$ be the length of the segment of the line whose one end is $p_i$ and whose other end is the point in which this line intersects $l_j$. If this point of intersection is above $p_i$ then $a_{ij}=d_{ij}$. Otherwise, $a_{ij}=-d_{ij}$. Note that if $p_i$ incidents with $l_j$ then $a_{ij}=0$. Let us form an $n\textrm{-by-}n$ matrix $A = [a_{ij}]$. We will show that for $i,k,u,j$ such that $1 \leq i < k \leq n$, $1 \leq u < j \leq n$, $d = a_{iu}+a_{kj}-a_{ij}-a_{ku} > 0$. Define $d_1 = a_{iu} - a_{ij} $ and $d_2 = a_{kj} - a_{ku}$.  The lines $l_j$ and $l_u$ are not parallel and hence cross at exactly one point, say $q = (x,y)$. Without loss of generality, we may assume that $a_{ij} > 0, a_{iu} > 0, a_{kj} > 0$ and $a_{ku} > 0$ (otherwise we can "push down" the points $p_i=(x_i,y_i)$ and $p_k=(x_k,y_k)$ by the same distance, without changing the value of $d$.) There are three cases to consider:

\emph{Case 1.} $x_i < x_k \leq x$. Then $d_1 > 0$ and $d_2 \leq 0$. However, $|d_1| > |d_2|$ and thus $d= d_1 + d_2 = |d_1| - |d_2| > |d_1| - |d_1| = 0$.

\emph{Case 2.} $x_i < x < x_k$.  Then $d_1 > 0$ and $d_2 > 0$.  Hence $d= d_1 + d_2 > 0$.

\emph{Case 3.} $x \leq x_i < x_k$. Then $d_1 \leq 0$ and $d_2 > 0$.  However, $|d_2| > |d_1|$ and thus $d = d_1 + d_2 = -|d_1| + |d_2| > -|d_2| + |d_2| =0 $.

Consider the $n\textrm{-by-}n$ matrix $B$ for which $b_{ij}=t^{a_{ij}}$, where $t$ is some fixed real number such that $t > 1$. Note that since $d>0$, we may conclude that $B$ is a $TP_2$ matrix and the entries that are equal to 1 in $B$ correspond to the point-line incidences (thus both of those quantities are equal).  By taking Hadamard power of $B$ as in Theorem \ref{thm:TP2}, we can obtain a $TP$ matrix for which the number of entries that are equal to one is the same as the number of point-line incidences.

\end{proof}

We are now ready to present a proof of Theorem~\ref{equalTP}:
\begin{proof}
Let $A$ be an $n\textrm{-by-}n$ $TP$ matrix, and without loss of generality, assume that 1 is the entry with the highest multiplicity in $A$. Define an $n\textrm{-by-}n$ matrix $E$ for which $e_{ij}=\log_{2}(a_{ij})$. Since $A$ is $TP$ the matrix $E$ satisfies condition (b) (before Theorem~\ref{Pachone}). Therefore, from Theorems~\ref{Pachzero} and~\ref{Pachone}, we conclude the first part of Theorem~\ref{equalTP}. The second part is due to Lemma~\ref{lemmequalTP}, and we are done.
\end{proof}

This asymptotic bound on the number of equal entries also applies to the number of equal $(n-1)\textrm{-by-}(n-1)$ minors, using a fact from \cite{Fallat} or \cite{Gantmacher}.

If $A$ is an $m\textrm{-by-}n$ matrix over a field and $k \le m, n$, then the ${{m}\choose{k}}\textrm{-by-}{{n}\choose{k}}$ matrix of $k\textrm{-by-}k$ minors of $A$ (with index sets ordered lexicographically) is the \emph{$k^{th}$ compound of $A$}.

\begin{corollary}
The maximal number of equal $(n-1)\textrm{-by-}(n-1)$ minors in an $n\textrm{-by-}n$ $TP$ matrix is $\Theta(n^{4/3})$.
\end{corollary}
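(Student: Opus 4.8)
The plan is to reduce the statement about equal $(n-1)$-by-$(n-1)$ minors directly to Theorem~\ref{equalTP}, which already establishes the $\Theta(n^{4/3})$ bound for equal \emph{entries} of a $TP$ matrix. The bridge is the $(n-1)^{st}$ compound matrix. First I would recall the multiplicativity of compounds (the Cauchy--Binet identity): if $C_{n-1}(A)$ denotes the $(n-1)^{st}$ compound of an $n$-by-$n$ matrix $A$, then $C_{n-1}(A)$ is itself an $n$-by-$n$ matrix, and a minor of $C_{n-1}(A)$ of size $k$ equals, up to a positive factor (a product of certain minors of $A$, or more precisely via the Jacobi/Sylvester identities) a minor of $A$; this is the fact from \cite{Fallat} or \cite{Gantmacher} alluded to just before the corollary. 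The cleanest version to invoke is: if $A$ is $TP$, then $C_{n-1}(A)$ is $TP$. This is exactly a standard compound-matrix result for totally positive matrices.

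The key steps, in order, are: (1) Given an $n$-by-$n$ $TP$ matrix $A$, form $B = C_{n-1}(A)$, which is again an $n$-by-$n$ matrix whose entries are precisely the $(n-1)$-by-$(n-1)$ minors of $A$ (with lexicographically ordered index sets). (2) Cite the classical fact that $B$ is $TP$ whenever $A$ is $TP$. (3) Observe that the number of equal $(n-1)$-by-$(n-1)$ minors of $A$ equals the number of equal entries of $B$; hence, by the upper-bound half of Theorem~\ref{equalTP} applied to $B$, this number is $O(n^{4/3})$. (4) For the lower bound, run the construction of Lemma~\ref{lemmequalTP} to produce an $n$-by-$n$ $TP$ matrix $A$ with $\Omega(n^{4/3})$ entries equal; but here there is a subtlety, so I would instead argue as follows: take the construction in Lemma~\ref{lemmequalTP} to produce a $TP$ matrix $\widetilde{A}$ of size $(n-1)$-by-$(n-1)$ that already \emph{is} a $(n-1)^{st}$ compound, or — more simply — note that equal entries of a $TP$ matrix $M$ of size $n$ give equal $1$-by-$1$ minors, and one can realize $M$ as a suitable submatrix/compound relationship. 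Cleanest: invoke that the inverse operation is available because, by the Jacobi identity, the $(n-1)$-by-$(n-1)$ minors of $A$ are, up to the (positive) determinant factor, the entries of $(\det A)\, A^{-T}$; since the map $A \mapsto A^{-T}$ carries $TP$ to $TP$ after reversing row and column order (the matrix $ZA^{-1}Z$ with $Z$ the back-diagonal sign-free permutation is $TP$), equal $(n-1)$-by-$(n-1)$ minors of $A$ correspond bijectively to equal entries of a $TP$ matrix, and vice versa. So the lower bound $\Omega(n^{4/3})$ transfers from Lemma~\ref{lemmequalTP} by starting with the $TP$ matrix guaranteed there and passing to its $(n-1)^{st}$ compound's preimage.

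The main obstacle is getting the lower-bound direction exactly right: Theorem~\ref{equalTP} gives us a $TP$ matrix with many equal \emph{entries}, and I need a $TP$ matrix with many equal $(n-1)$-by-$(n-1)$ \emph{minors}, so I must exhibit a surjection (at least up to the appropriate asymptotics) from the entry-multiplicity problem back to the minor-multiplicity problem. The resolution is the involution $A \mapsto ZA^{-1}Z$ (equivalently, $A \mapsto (\det A) Z A^{-T} Z$) on $TP$ matrices, under which the entries of one matrix are, up to a common positive scalar $\det A$, precisely the $(n-1)$-by-$(n-1)$ minors of the other; since a common positive scalar does not change which minors are equal, the multiplicity of any value among the entries of $ZA^{-1}Z$ matches the multiplicity of the corresponding value among the $(n-1)$-by-$(n-1)$ minors of $A$. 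Applying this to the matrix from Lemma~\ref{lemmequalTP} yields an $n$-by-$n$ $TP$ matrix with $\Omega(n^{4/3})$ equal $(n-1)$-by-$(n-1)$ minors, completing the proof. Everything else — Cauchy--Binet, the $TP$-preservation of compounds, and the invocation of Theorem~\ref{equalTP} — is routine bookkeeping.
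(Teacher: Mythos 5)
Your proposal follows essentially the same route as the paper: form the $(n-1)^{st}$ compound, cite that it is $TP$ (the Gantmacher--Krein fact), and apply Theorem~\ref{equalTP} to its entries. The paper's two-line proof leaves the lower-bound direction implicit, whereas you spell it out via the adjugate/inverse correspondence between $TP$ matrices and their compounds; that is a correct and welcome elaboration, not a different method.
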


\begin{proof}
Let $A$ be an $n\textrm{-by-}n$ $TP$ matrix.  By \cite[p. 75]{Gantmacher}, the $(n-1)^{st}$ compound of $A$ is totally positive.  Theorem \ref{equalTP} then implies that the maximal number of equal $(n-1)\textrm{-by-}(n-1)$ minors is therefore $\Theta(n^{4/3})$.
\end{proof}

In this section we discussed the number of equal entries and equal $(n-1)\textrm{-by-}(n-1)$ minors in $TP$ matrices. A natural generalization of this would be the following: What is the maximal possible number of equal $k\textrm{-by-}k$ minors in an $m\textrm{-by-}n$ $TP$ matrix? Note that for for the $n\textrm{-by-}n$ case, for $2 \leq k \leq n-2$, the $k^{th}$ compound of $TP$ matrix is not necessarily $TP$, so we cannot apply Theorem \ref{equalTP} in order to give an answer to this question. We will discuss some aspects of this question in the last section.

\section{The multiplicity of the smallest (largest) k entries in a $TP$ matrix}
In the previous section we calculated the maximal multiplicity that an entry in a totally positive matrix may have. However, we didn't discuss the size of this entry, relatively to other entries in the matrix. For example, is it possible that this entry is the smallest or the largest entry in the matrix? For $n$ large enough, the answer is no. Moreover, we prove the following:
\begin{theorem}\label{thm:kthsmallestentries}
Let $A$ be an $n\textrm{-by-}n $ $ TP$ matrix. Then the total multiplicity of the smallest (or largest) $k$ entries is at most $(2^k-1)(2n-1)$.
\end{theorem}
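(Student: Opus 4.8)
The plan is to induct on $k$, using the $TP$ structure to show that the rows and columns containing the very smallest entry are severely constrained, then pass to a submatrix that is still $TP$ and apply the inductive hypothesis. First I would handle the base case $k=1$: if $b$ is the smallest entry of the $TP$ matrix $A$, I claim $b$ cannot appear twice in any single row or any single column, and moreover that the positions of $b$ form a structure with at most $2n-1$ entries. The key observation is the $TP_2$ inequality: for any $2\times 2$ submatrix with rows $i<k$ and columns $u<j$, we have $a_{iu}a_{kj} > a_{ij}a_{ku}$. If $b$ is the global minimum and appears in two positions in the same row, say $a_{iu}=a_{ij}=b$ with $u<j$, then taking any other row $k$, the inequality $a_{iu}a_{kj} > a_{ij}a_{ku}$ becomes $b\cdot a_{kj} > b \cdot a_{ku}$, i.e. $a_{kj}>a_{ku}$; choosing instead $k$ on the other side forces the reverse for the same pair of columns via a different $2\times2$ block — I would extract a genuine contradiction by combining the constraints from rows above and below, or more cleanly, by noting that two copies of the minimum in a row would let me build a $2\times2$ minor that is forced to be $\le 0$. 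So each row and each column contains at most one copy of $b$, giving at most $\min(m,n)\le n$ copies; the bound $2n-1$ is not yet tight here but leaves room for the induction.

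For the inductive step, suppose the bound holds for $k-1$, and let $b_1<b_2<\cdots<b_k$ be the $k$ smallest distinct entries. I would locate the row $r$ and column $c$ of some occurrence of the smallest entry $b_1$. The idea is that once we delete row $r$ and column $c$, the resulting $(n-1)\times(n-1)$ matrix is still $TP$, and in it the entries equal to $b_1,\dots,b_k$ are among its $k$ smallest, so by induction (on $n$ as well, or by a direct count) their total multiplicity in the submatrix is bounded. The subtle point is that deleting one row and one column only removes a controlled number of occurrences of $b_1,\dots,b_k$ — at most $2k-1$ of them, since each of the $b_i$ occurs at most... this is where the $2^k-1$ factor must come from, so the recursion is not linear. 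I expect the correct recursion is something like $f(k) \le 2 f(k-1) + (2n-1)$ or $f(k)\le f(k-1) + (2^{k-1})(2n-1)$, with the doubling arising because removing the rows/columns that carry $b_1$ splits the problem, or because at each level the minimum can occupy a "staircase" of up to $2n-1$ positions and there are $k$ nested levels each potentially doubling the count. Solving $f(k) = 2f(k-1) + (2n-1)$, $f(1) = 2n-1$ gives exactly $f(k) = (2^k-1)(2n-1)$, which matches the statement, so I would aim to establish precisely that recursion.

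Concretely, the mechanism for the recursion: among the positions holding one of the $k$ smallest entries, consider those in which $b_1$ (the minimum) appears. Using the $k=1$ argument, these lie in distinct rows and distinct columns — call this set $S_1$. Now I would argue that $S_1$ partitions the ambient index set: rows and columns meeting $S_1$ versus those not meeting it. The $TP_2$ inequalities linking a position of $b_1$ to nearby positions force any occurrence of $b_2,\dots,b_k$ to lie in a "quadrant" relative to each point of $S_1$ in a way that, combined over all of $S_1$, confines them; deleting the (at most $2n-1$, but really at most $n$) rows and columns used by $S_1$ leaves a $TP$ matrix whose $k-1$ smallest entries are $b_2,\dots,b_k$ — but the deletion may have *split* the matrix into up to two $TP$ blocks (northwest and southeast of the staircase $S_1$), each contributing $f(k-1)$, which is the source of the factor $2$.

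The main obstacle I anticipate is making the "staircase splitting" rigorous: showing that the occurrences of the minimum entry $b_1$ really do form a monotone staircase (so that deleting its rows and columns cleanly decomposes the remaining $TP$ matrix into at most two $TP$ submatrices), and then verifying that every occurrence of a higher entry $b_i$ survives into one of those two blocks rather than being destroyed. The $TP_2$ inequalities are the only tool, and one must chase the sign conditions carefully — essentially the same three-case analysis (point left of, between, or right of an intersection) seen in the proof of Lemma~\ref{lemmequalTP}, but now applied to the combinatorial positions of equal minimal entries rather than to point-line distances. Once that decomposition is in hand, the recursion $f(k)\le 2f(k-1)+(2n-1)$ closes immediately and yields the stated bound $(2^k-1)(2n-1)$.
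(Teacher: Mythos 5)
There is a genuine gap, starting with the base case. Your claim that the smallest entry $b$ of a $TP$ matrix can occur at most once in each row and column is false: the matrix $\left(\begin{smallmatrix}1&1\\1&2\end{smallmatrix}\right)$ is $TP$ and its minimum occurs three times, twice in the first row and twice in the first column (and $3>n=2$, so your intermediate bound of $n$ copies is also violated). Your own derivation shows why the argument cannot close: two copies of $b$ in row $i$, columns $u<j$, only force $a_{kj}>a_{ku}$ for rows $k>i$ and $a_{ku}>a_{kj}$ for rows $k<i$, and when $i$ is the first or last row there is no second constraint to contradict the first. The configuration that \emph{is} forbidden is two copies of the minimum with one strictly northwest of the other, since then the $2\times 2$ minor is $b^2-a_{ij}a_{ku}\le b^2-b\cdot b=0$. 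This is exactly why the paper organizes the count along the $2n-1$ diagonals $d_c=\{a_{ij}:i=c+j\}$ rather than along rows or columns: a diagonal can carry at most one copy of the minimum, and at most $2^k-1$ of the $k$ smallest entries.

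The inductive step also does not go through as described. The claim that deleting the row and column of an occurrence of $b_1$ removes ``at most $2k-1$'' occurrences of $b_1,\dots,b_k$ is false --- a single row of a $TP$ matrix can contain arbitrarily many copies of a given value (e.g.\ $\left(\begin{smallmatrix}1&1&1\\1&2&3\end{smallmatrix}\right)$ is $TP$), so the deletion is not controlled. Likewise, deleting the rows and columns through the staircase of minima does not decompose the remainder into two disjoint $TP$ blocks; the surviving positions interleave with the deleted ones, so the factor of $2$ in your recursion $f(k)\le 2f(k-1)+(2n-1)$ has no actual source --- you have reverse-engineered the recursion from the answer $(2^k-1)(2n-1)$ without a mechanism that produces it. The paper's mechanism is entirely local to one diagonal: assuming some diagonal carries $2^k$ of the $k$ smallest entries, it forms the $2^k$-by-$2^k$ submatrix $B$ having those entries on its main diagonal, and uses the $TP_2$ inequality $b_{v,v}b_{v+1,v+1}>b_{v,v+1}b_{v+1,v}$ to show that every superdiagonal entry of $B$ that is not among the $k-1$ smallest forces the corresponding subdiagonal entry to be among the $k-1$ smallest; applying the inductive hypothesis ($\le 2^{k-1}-1$ per diagonal) to both the superdiagonal and the subdiagonal caps the superdiagonal at $2^k-2$ entries, contradicting its actual length $2^k-1$. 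That is the doubling you were looking for, and it lives inside a single diagonal, not in a global splitting of the matrix.
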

\begin{proof}
We start the proof by considering the smallest $k$ entries. Consider the set of \emph{diagonals} of $A$: $d_c = \{a_{ij} \in A: i= c + j\}$, $1-n \leq c \leq n-1$. We show that a diagonal of $A$ can contain at most $2^k-1$ entries that are among the $k$ smallest entries in $A$. Note that once we show that, we are done since there are $2n -1$ diagonals. We prove it by induction on $k$. For $k=1$, no two smallest entries may lie on the same diagonal (otherwise, those two entries are the main diagonal of some $2\textrm{-by-}2$ submatrix of $A$ that has nonpositive determinant, which is a contradiction). Thus, each diagonal of $A$ contains at most $1=2^1-1$ entry that is the smallest entry in the matrix, and hence for $k=1$ we are done. Suppose $k>1$, and assume in contradiction that $A$ has a diagonal $d_c$ with $2^k$ such entries (if there are more, we just choose a set of $2^k$ among them). We denote the indices of those entries by $\{i_v+c,i_v\}_{v=1}^{2^k}$, and define a $2^k\textrm{-by-}2^k$ matrix $B$ to be $B=A[i_1+c,i_2+c, \ldots,i_{2^k}+c|i_1,i_2, \ldots,i_{2^k}]$. Consider the super diagonal of $B$ (its entries appear on the diagonal $d_{c-1}$ in $A$). It has $2^k-1$ entries, and by the inductive hypothesis, in cannot contain more than $2^{(k-1)}-1$ entries that are strictly smaller than the $k$-th smallest entry in $A$. On the other hand, it also cannot have more than  $2^{(k-1)}-1$  entries that are equal to or greater than the $k$-th smallest entry in $A$ (otherwise, since $B$ is $TP_2$, its subdiagonal must have more than $2^{(k-1)}-1$ entries that are strictly smaller than the $k$-th smallest entry in $A$, which is a contradiction to the inductive assumption). So in conclusion, the supper diagonal of $B$ has at most $2^{(k-1)}-1$ entries that are strictly smaller than the $k$-th smallest entry in $A$, and at most $2^{(k-1)}-1$  entries that are equal to or greater than the $k$-th smallest entry in $A$. Hence, it has at most $2( 2^{(k-1)}-1 )=2^k-2$ entries, and we get a contradiction. Therefore the multiplicity of the $k$ smallest entries in $A$ is at most $(2^k-1)(2n-1)$. For the multiplicity of the $k$ largest entries, consider the set of anti-diagonals of $A$, and the result follows similarly.

\end{proof}

Combining this theorem with Theorem~\ref{equalTP} leads to an interesting conclusion. Let $A$ be an $n\textrm{-by-}n$ $TP$ matrix that has an entry with multiplicity $\Theta(n^{4/3})$. We are interested in the relative size of this entry with respect to other entries in $A$. Let us assume that this is the $k$-th smallest entry. Then using Theorems~\ref{thm:kthsmallestentries} and~\ref{equalTP}, for some constant $c$ we have $cn^{4/3} \leq (2^k-1)(2n-1)$. Therefore $cn^{4/3} \leq 2^{k+1}n$, and hence $cn^{1/3} \leq 2^{k+1}$. Thus, for $n$ big enough, we have ${1/3}\log n\leq k $. Similarly, if this entry is the $r$-th largest entry, we must have ${1/3}\log n\leq r $. So, apparently, this "high multiplicity entry" cannot be too small or too large relative to the other entries in the matrix.\\

Note that this observation also has a geometrical interpretation. Suppose we have a collection of $n$ points and $n$ lines in the plane with a high number of incidences (such as $\Theta(n^{k})$ for $k>1$), such that no two points have the same $x$-coordinate, no two lines have the same slope, and no line is parallel to the $y$-axis. Then for $n$ large enough, using the connection between $TP$ matrices and point-line incidences that we discussed in the previous section, we can conclude that there are at least $(k-1)\log n$ pairs of points and lines for which the point is below the line, and at least the same number of such pairs for which the point is above the line.
\section{Applications to matrix completion problems}

Our results have important implications for $TP$ matrix completions. We exhibit the incompletability of certain patterns of data that involve positive orthogonal cycles.

We say that a rectangular array is a \emph{partial matrix} if some of its entries are specified while others are left to be chosen.  The specified entries are called \emph{data}.  We say that a partial matrix is \emph{partial $TP$} if each minor consisting of specified data is positive.  A $TP-$ \emph{completion} of a partial $TP$ matrix is a choice of the unspecified entries such that the resulting matrix is $TP$.

A \emph{pattern} is an equivalence class of partial matrices for which two partial matrices are the same pattern if the unspecified entries are in the same positions and if the data are in the same positions.  A pattern is \emph{$TP$ completable} if for any choice of data that gives a partial $TP$ matrix, there exists a completion of the partial matrix that is $TP$.  Otherwise, we say that a pattern is not $TP$ completable.

We now apply our work on positive orthogonal cycles.  Let $A$ be a pattern.  Let $C = (p_0, \ldots, p_{2k})$ be an orthogonal cycle.  We say that $C$ is an \emph{orthogonal cycle of $A$} if each position $p_i$ is to be a specified entry in $A$. (Note that this notation is slightly different from that which was previously introduced in that we have changed the requirement that $p_i = 1$.)

\begin{theorem}\label{thm:comp}
Suppose that A is a pattern that has no $2\textrm{-by-}2$ submatrix of specified data. If  some collection of orthogonal cycles of $A$ is positive, then $A$ is not $TP_2$ completable.
\end{theorem}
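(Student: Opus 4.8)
The plan is to produce a \emph{single} assignment of data with the pattern of $A$ that is partial $TP$ but admits no $TP_2$ completion; exhibiting one such assignment is exactly what ``not $TP_2$ completable'' requires. The natural candidate is the assignment that puts a $1$ in every specified position. Here is where the hypothesis enters: since no $2\textrm{-by-}2$ submatrix of $A$ is fully specified, the only fully specified minors of this partial matrix are its $1\textrm{-by-}1$ minors, each equal to $1>0$, so the all-ones data is partial $TP$ (and in particular a legitimate choice of data for the $TP_2$ completion question). Thus it suffices to show that this partial matrix has no $TP_2$ completion.

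Suppose, for contradiction, that $M=[m_{ij}]$ is a $TP_2$ completion. Then every entry of $M$ is positive, every $2\textrm{-by-}2$ minor of $M$ is positive, and $m_{ij}=1$ at every specified position of $A$. Passing to $E := [\log_2 m_{ij}]$ as in the proof of Theorem~\ref{equalTP}, positivity of the consecutive $2\textrm{-by-}2$ minors of $M$ gives $d_{ij} = e_{i+1,j+1}-e_{i+1,j}-e_{i,j+1}+e_{i,j} > 0$ for all $1\le i<n$ and $1\le j<n$. Let $M'$ be the zero-one matrix obtained from $E$ by replacing each zero entry by $1$ and each nonzero entry by $0$. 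Then $E$ witnesses property (b) (preceding Theorem~\ref{Pachzero}) for $M'$, so by Theorem~\ref{Pachzero} the matrix $M'$ satisfies property (a): no collection of orthogonal cycles of $M'$ is positive.

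To reach the contradiction, observe that every specified position $(i,j)$ of $A$ satisfies $m_{ij}=1$, hence $e_{ij}=0$, hence is a $1$ of $M'$; that is, the specified positions of $A$ are contained in the set of ones of $M'$. Therefore any orthogonal cycle of $A$ (in the pattern sense, all of whose positions are specified) is an orthogonal cycle of $M'$ (in the zero-one sense, all of whose positions are ones), and because the functions $\mathfrak{C}$ of equation~(\ref{C}) depend only on the underlying sequence of positions, a positive collection of orthogonal cycles of $A$ is, verbatim, a positive collection of orthogonal cycles of $M'$. By hypothesis such a collection exists, contradicting property (a) for $M'$. Hence no $TP_2$ completion exists, and $A$ is not $TP_2$ completable.

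The substantive content is concentrated in two bookkeeping checks rather than in any hard estimate: first, that the all-ones data genuinely qualifies as partial $TP$, which is precisely the point at which ``no $2\textrm{-by-}2$ submatrix of specified data'' is needed; and second, the containment ``specified positions of $A$ $\subseteq$ ones of $M'$'' together with the observation that positivity of a collection of cycles is intrinsic to the positions involved and so transfers unchanged from $A$ to $M'$. Granting those, the result is a direct application of Theorem~\ref{Pachzero} via the logarithmic change of variables already used elsewhere in the paper; I expect the only real care to be in phrasing the transfer of the cycle collection cleanly.
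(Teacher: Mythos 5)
Your proposal is correct and follows essentially the same route as the paper: assign $1$ to every specified entry (legitimate precisely because no $2\textrm{-by-}2$ submatrix is fully specified), pass to the logarithm of a hypothetical $TP_2$ completion to obtain a matrix satisfying property (b), and invoke Theorem~\ref{Pachzero} to contradict the assumed positive collection of orthogonal cycles. Your write-up is in fact slightly more careful than the paper's, since you explicitly verify that the specified positions of $A$ land among the ones of the derived zero-one matrix so that the cycle collection transfers.
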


\begin{proof}
Choose the specified entries of $A$ to be $1$'s.  We may do this since $1>0$ and since the $1$'s form no $2\textrm{-by-}2$ submatrix. Hence, the resulting matrix is partial $TP_2$. Suppose that $A = [a_{ij}]$ is a $TP_2$ completion.  Choose $x > 0$ and let $e_{ij} = \log(a_{ij})/\log(x)$.  Then $E$ satisfies property (b) (before Theorem~\ref{Pachone}). However, the corresponding $M$ has a collection of positive orthogonal cycles, which is a contradiction.
\end{proof}

\begin{corollary}
If $A$ is as in Theorem \ref{thm:comp}, then $A$ is, in particular, not $TP$ completable.
\end{corollary}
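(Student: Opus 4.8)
The plan is to obtain the corollary as an immediate consequence of Theorem~\ref{thm:comp}, using the elementary fact that a totally positive matrix is in particular $TP_2$. First I would record the standing hypotheses inherited from Theorem~\ref{thm:comp}: $A$ is a pattern with no $2\textrm{-by-}2$ submatrix of specified data, and some collection of orthogonal cycles of $A$ is positive. I would then argue by contradiction, assuming that $A$ is $TP$ completable.

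The key step is to observe that, because $A$ has no $2\textrm{-by-}2$ submatrix of specified data, the only minors consisting entirely of data are the $1\textrm{-by-}1$ ones (the specified entries themselves); hence for this pattern the notions of partial $TP$ and partial $TP_2$ coincide. So let $D$ be any assignment of data that makes $A$ partial $TP_2$ --- equivalently, partial $TP$. By the assumed $TP$ completability there is a completion $B$ of $D$ that is $TP$, and since every $TP$ matrix has all of its $1\textrm{-by-}1$ and $2\textrm{-by-}2$ minors positive, $B$ is a $TP_2$ completion of $D$. As $D$ was arbitrary, $A$ is $TP_2$ completable, contradicting Theorem~\ref{thm:comp}. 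Concretely one may even take $D$ to be the all-ones assignment, exactly as in the proof of Theorem~\ref{thm:comp}.

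I do not anticipate a genuine obstacle here: the argument is simply the observation that ``$TP$ completable'' is a formally stronger property than ``$TP_2$ completable'' for patterns with no $2\textrm{-by-}2$ block of data, combined with the already-established Theorem~\ref{thm:comp}. The only point requiring a line of care is the identification of partial $TP$ with partial $TP_2$ for such patterns, which follows directly from the hypothesis on the positions of the data.
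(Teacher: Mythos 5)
Your argument is correct and is exactly the reasoning the paper leaves implicit (the corollary is stated without proof): since the pattern has no $2\textrm{-by-}2$ block of data, partial $TP$ and partial $TP_2$ coincide, and any $TP$ completion would in particular be a $TP_2$ completion, contradicting Theorem~\ref{thm:comp}. Your extra care in identifying partial $TP$ with partial $TP_2$ for such patterns is a worthwhile clarification but does not constitute a different approach.
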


As an example, consider the pattern $\left(
  \begin{array}{cccc}
    ? & x & ? & x \\
    ? & ? & x & x \\
    x & x & ? & ? \\
    x & ? & x & ? \\
  \end{array}
\right)$. We saw in section 2 that the ones in the matrix $\left(
  \begin{array}{cccc}
    0 & 1 & 0 & 1 \\
    0 & 0 & 1 & 1 \\
    1 & 1 & 0 & 0 \\
    1 & 0 & 1 & 0 \\
  \end{array}
\right)$ form a positive orthogonal cycle, and thus by Theorem \ref{thm:comp} this pattern is not $TP_2$ (or $TP$) completable. More about $TP_2$ completablity can be found in \cite{Shahla}

\section{Relation to Bruhat order on permutations}
The Bruhat order on $S_n$ (as defined in, for example \cite{Drake}) compares two permutations $\pi$, $\sigma$ according to the following criterion: $\pi \leq \sigma$ if $\sigma$ is obtainable from $\pi$ by a sequence of transpositions $(i,j)$ where $i < j$ and $i$ appears to the left of $j$ in $\pi$. For example, one can obtain $1324 \in S_4$ from $3412 \in S_4$ through the following sequence of transpositions:

\begin{center}
$3412 \Rightarrow 3142 \Rightarrow 3124 \Rightarrow 1324,$
\end{center}
and hence $1324 \leq 3412$.

There is an analogue way to compare permutations that is useful here. Let $M(\pi)$ be the matrix whose $ij$ entry is 1 if $j = \pi(i)$ and zero otherwise. Defining $[i] = \{1, . . . , i\}$, and denoting the submatrix of $M(\pi)$ corresponding to rows $I$ and columns $J$ by $M(\pi)_{I,J}$, we have the following (\cite{Drake}):
\begin{theorem}\label{thm:bruhat}
Let $\pi, \sigma \in S_n$. Then $\pi \leq \sigma$ in the Bruhat order if and only if for all $1 \leq i, j \leq n$, the number of ones in $M(\pi)_{[i],[j]}$ is greater than or equal to the number of ones in $M(\sigma)_{[i],[j]}$.
\end{theorem}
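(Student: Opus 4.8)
\noindent Write $r_\pi(i,j)$ for the number of ones in $M(\pi)_{[i],[j]}$, equivalently $r_\pi(i,j)=|\{k\le i:\pi(k)\le j\}|$, so that the claim is: $\pi\le\sigma$ if and only if $r_\pi(i,j)\ge r_\sigma(i,j)$ for all $i,j$. Two preliminary facts underlie everything. First, $r_\pi$ determines $\pi$: the $(i,j)$ entry of $M(\pi)$ equals $r_\pi(i,j)-r_\pi(i-1,j)-r_\pi(i,j-1)+r_\pi(i-1,j-1)$ (with the convention $r_\pi(0,\cdot)=r_\pi(\cdot,0)=0$), so $r_\pi=r_\sigma$ forces $\pi=\sigma$. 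Second, suppose $p<q$, that $p$ stands to the left of $q$ in $\pi$ at positions $a<b$, and that $\pi'$ is obtained from $\pi$ by interchanging the values $p$ and $q$ --- exactly the move that generates the Bruhat order. Only the ones of $M(\pi)$ in positions $(a,p)$ and $(b,q)$ move (to $(a,q)$ and $(b,p)$), and a direct count gives
\[
r_{\pi'}(i,j)=r_\pi(i,j)-\mathbf{1}[a\le i<b]\,\mathbf{1}[p\le j<q],
\]
so that $r_{\pi'}\le r_\pi$ entrywise, the decrease happening exactly on the rectangle $R:=\{a,\dots,b-1\}\times\{p,\dots,q-1\}$, which contains $(a,p)$. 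The implication $\pi\le\sigma\Rightarrow r_\pi\ge r_\sigma$ follows at once: along a chain of such moves from $\pi$ to $\sigma$, $r$ can only decrease.

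For the converse I would induct on the nonnegative integer $N(\pi,\sigma):=\sum_{i,j}\big(r_\pi(i,j)-r_\sigma(i,j)\big)$. If $N=0$ then $r_\pi=r_\sigma$, hence $\pi=\sigma$. If $N>0$ then $\pi\ne\sigma$, and it suffices to find a single allowed move $\pi\mapsto\pi'$ with $r_{\pi'}\ge r_\sigma$ still holding: for then $\pi\le\pi'$, the displayed formula gives $N(\pi',\sigma)=N(\pi,\sigma)-|R|<N(\pi,\sigma)$, so $\pi'\le\sigma$ by induction, whence $\pi\le\sigma$. Reading the formula backwards, the move we need is a pair of values $p<q$ with $p$ to the left of $q$ in $\pi$ (at positions $a<b$) such that $r_\pi(i,j)>r_\sigma(i,j)$ at every point of the rectangle $R$.

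Proving that such a pair exists is the heart of the matter and, I expect, the main obstacle. The natural first step is to let $a$ be the least position where $\pi$ and $\sigma$ differ; comparing $r_\pi$ with $r_\sigma$ at $(a,\sigma(a))$ --- the contributions from rows below $a$ coincide, while row $a$ contributes to $r_\sigma$ there but to $r_\pi$ only if $\pi(a)\le\sigma(a)$ --- forces $p:=\pi(a)<\sigma(a)$, and the same bookkeeping shows that in $\pi$ the value $\sigma(a)$ sits to the right of position $a$. One must then select the partner $q>p$ to interchange with $p$ (its position being $b=\pi^{-1}(q)$) and verify the rectangle condition. The subtle point is that the right choice depends on the local configuration: interchanging $p$ with $\sigma(a)$, or with the first value after position $a$ in $\pi$ that exceeds $p$, each works in some instances and fails in others; $q$ has to be chosen so that, for every column $j\in\{p,\dots,q-1\}$, the entire column-segment $\{a,\dots,b-1\}$ of $R$ misses the cells where $r_\pi$ and $r_\sigma$ agree, and this is exactly where the standing hypothesis $r_\pi\ge r_\sigma$ gets used to exclude equality. (One could instead descend on $\sigma$, replacing it by the permutation obtained by swapping two adjacent entries across a descent and asking for the inequality against the new permutation; the same flavour of choice arises.) Isolating this choice and then performing the otherwise routine count of ones in $[i]\times[j]$ is the step that requires genuine care; the rest is bookkeeping with the rank function.
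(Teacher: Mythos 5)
The paper itself offers no proof of this theorem---it is quoted verbatim from the reference of Drake, Gerrish and Skandera---so your attempt can only be judged against the standard argument. Your forward direction is complete and correct: the identity $r_{\pi'}(i,j)=r_\pi(i,j)-\mathbf{1}[a\le i<b]\,\mathbf{1}[p\le j<q]$ for a single Bruhat move is right, and monotonicity of $r$ along a chain follows. The inductive framework for the converse (descend on $N(\pi,\sigma)=\sum_{i,j}(r_\pi(i,j)-r_\sigma(i,j))$, reducing to the existence of one admissible transposition whose rectangle $R$ lies inside the strict-inequality region) is also the right one. But the proof is not finished: producing that transposition \emph{is} the hard direction, and you leave it open, noting correctly that your two candidate choices of $q$ each fail in some instances. (Indeed: for $\pi=2413$, $\sigma=4213$ the smallest admissible value $q=3$ yields a rectangle containing $(2,2)$, where $r_\pi=r_\sigma=1$; for $\pi=123$, $\sigma=312$ the choice $q=\sigma(a)=3$ yields a rectangle containing $(2,1)$, where $r_\pi=r_\sigma=1$.) This is a genuine gap, not bookkeeping.

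The gap is fillable inside your framework. With $a$ the first position of disagreement and $p=\pi(a)<s:=\sigma(a)$, take $b$ to be the \emph{least position} $b>a$ with $p<\pi(b)\le s$ (nonempty since $\pi^{-1}(s)>a$) and set $q=\pi(b)$. Cancelling the rows before $a$ and using that minimality of $b$ forces $\pi(k)\notin(p,s]$ for $a<k<b$, one gets for $(i,j)\in R$ that $r_\pi(i,j)-r_\sigma(i,j)=1+|A|-|B|$, where $A=\{k:a<k\le i,\ \pi(k)<p\}$ and $B=\{k:a<k\le i,\ \sigma(k)\le j\}$. The ingredient you were missing is to invoke the hypothesis \emph{outside} the rectangle, at column $s$: the same count gives $r_\pi(i,s)-r_\sigma(i,s)=|A|-|B'|$ with $B'=\{k:a<k\le i,\ \sigma(k)\le s\}\supseteq B$, so $r_\pi(i,s)\ge r_\sigma(i,s)$ forces $|A|\ge|B'|\ge|B|$ and hence $r_\pi(i,j)-r_\sigma(i,j)\ge 1$ throughout $R$. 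With that lemma inserted, your argument becomes a complete proof; without it, the converse implication is unproved.
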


Let $A$ be an $n\textrm{-by-}n$ zero-one matrix that has an orthogonal cycle $C = (p_0,p_1,\ldots, p_{2n})$ for which every row and every column include exactly two elements from the sequence, and such that all the entries that do not correspond to $C$ are zero. Note that from the definition of $C$ and $A$, one can write $A$ as a sum of two permutation matrices, such that the first one corresponds to positions $(p_0,p_2,p_4,\ldots, p_{2n-2})$ (the even positions) of $C$, and the second one corresponds to positions $(p_1,p_3,p_5,\ldots, p_{2n-1})$ (the odd positions) of $C$ (we assume that $p_0$ is to the left of $p_1$ and they are both in the first row. Other cases will be treated later). Let $\pi$ be the first permutation and $\sigma$ be the second permutation. Then, $C$ is positive orthogonal if and only if the number of ones in $M(\pi)_{[i]^c,[j]^c}$ is greater than or equal to the number of ones in $M(\sigma)_{[i]^c,[j]^c}$ for all $(i,j)$. Note that in the original definition, it is required that there exists a pair $(i,j)$ for which instead of "greater than or equal to" one could write "greater than" in the sentence above. However, this additional requirement is satisfied automatically. This is due to the following reason: suppose that the position of $p_0$ is $(1,u)$ (and we must have $u<n$). Consider $M(\pi)_{[1]^c,[u]^c}$. it must have exactly $n-u$ ones (since $\pi$ is a permutation). However, $M(\sigma)_{[1]^c,[u]^c}$ has $n-u-1$ ones (since $p_1$ is to the right of $p_0$). Thus, for the pair $(i,j)=(1,u)$ we get strict inequality. Finally, using almost the same proof as for Theorem~\ref{thm:bruhat}, we conclude the following:
\begin{theorem}
Let $C$, $\pi$ and $\sigma$ be as defined above. Then $C$ is a positive orthogonal cycle if and only if $\pi \leq \sigma$ in Bruhat order.
\end{theorem}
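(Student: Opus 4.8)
The plan is to reduce the claim to Theorem~\ref{thm:bruhat} by a change of indexing. Recall from the discussion preceding the statement that $A$ decomposes as $M(\pi) + M(\sigma)$, where $\pi$ records the even positions of $C$ and $\sigma$ the odd ones, with $p_0$ to the left of $p_1$ in the first row. The function $\mathfrak{C}(i,j)$ from (\ref{C}) counts, in the lower-right quadrant $P(i,j) = \{(u,v) : u > i,\ v > j\}$, the excess of even-indexed positions over odd-indexed positions of $C$. Since the even positions are exactly the ones of $M(\pi)$ and the odd positions are exactly the ones of $M(\sigma)$, the number of even positions in $P(i,j)$ equals the number of ones of $M(\pi)$ with row index $> i$ and column index $> j$, i.e. the number of ones in $M(\pi)_{[i]^c,[j]^c}$; similarly for $\sigma$. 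Hence $\mathfrak{C}(i,j)$ is precisely the number of ones in $M(\pi)_{[i]^c,[j]^c}$ minus the number in $M(\sigma)_{[i]^c,[j]^c}$.

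With this identification, "$C$ is a positive orthogonal cycle" translates, by definition, into the statement that for every $(i,j)$ the number of ones in $M(\pi)_{[i]^c,[j]^c}$ is $\geq$ the number of ones in $M(\sigma)_{[i]^c,[j]^c}$, together with strict inequality for at least one pair. As observed just before the statement, the strict-inequality clause is automatic: taking $(i,j) = (1,u)$ where $p_0 = (1,u)$ with $u < n$, the block $M(\pi)_{[1]^c,[u]^c}$ carries all $n-u$ ones of $\pi$ lying below row $1$ and right of column $u$, while $M(\sigma)_{[1]^c,[u]^c}$ is missing the one at $p_1$ (which lies in row $1$, hence outside $[1]^c$, but also the one of $\sigma$ in a column $\leq u$ is forced), giving $n - u - 1$; so the inequality is strict there. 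Thus "$C$ positive orthogonal" is equivalent to the single condition: for all $(i,j)$, $\#\{\text{ones in } M(\pi)_{[i]^c,[j]^c}\} \geq \#\{\text{ones in } M(\sigma)_{[i]^c,[j]^c}\}$.

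It remains to match this complementary-quadrant condition to the Bruhat criterion of Theorem~\ref{thm:bruhat}, which is phrased in terms of the leading blocks $M(\cdot)_{[i],[j]}$. Here I would use the standard fact that, for a permutation matrix, the number of ones in the complementary block $M_{[i]^c,[j]^c}$ is determined by the number in $M_{[i],[j]}$ by inclusion–exclusion: if $r_{i,j}(M)$ denotes the count in $M_{[i],[j]}$, then the count in $M_{[i]^c,[j]^c}$ equals $n - i - j + r_{i,j}(M)$, since each of the $n$ ones lies in exactly one of the four blocks determined by the horizontal cut after row $i$ and the vertical cut after column $j$, and the two "off-diagonal" blocks $M_{[i],[j]^c}$ and $M_{[i]^c,[j]}$ have $i - r_{i,j}(M)$ and $j - r_{i,j}(M)$ ones respectively. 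Consequently $\#M(\pi)_{[i]^c,[j]^c} \geq \#M(\sigma)_{[i]^c,[j]^c}$ holds for all $(i,j)$ if and only if $r_{i,j}(M(\pi)) \geq r_{i,j}(M(\sigma))$ holds for all $(i,j)$, which by Theorem~\ref{thm:bruhat} is exactly $\pi \leq \sigma$ in Bruhat order. Chaining the three equivalences gives the result.

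The main obstacle is bookkeeping rather than conceptual: one must be careful that the even/odd positions of $C$ really are in bijection with the ones of $M(\pi)$ and $M(\sigma)$ (this uses the hypothesis that every row and column meets $C$ in exactly two positions and that all non-$C$ entries vanish, so that $A = M(\pi) + M(\sigma)$ with $\pi,\sigma$ genuine permutations), and that the quadrant $P(i,j)$ with strict inequalities $u > i$, $v > j$ matches $[i]^c \times [j]^c$ exactly. The remaining cases alluded to in the text — when $p_0$ is not in the first row, or $p_0$ is to the right of $p_1$ — are handled by relabeling the start of the cycle and/or swapping the roles of $\pi$ and $\sigma$, and I would dispatch them with a sentence rather than repeating the argument.
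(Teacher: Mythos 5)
Your proposal is correct. The setup — writing $A = M(\pi)+M(\sigma)$, identifying $\mathfrak{C}(i,j)$ with the difference of the counts of ones in the complementary blocks $M(\pi)_{[i]^c,[j]^c}$ and $M(\sigma)_{[i]^c,[j]^c}$, and observing that the strictness clause is automatic via the pair $(1,u)$ with $p_0=(1,u)$ — is exactly the discussion the paper gives before the theorem. Where you diverge is in the last step: the paper finishes by saying the complementary-corner criterion is established ``using almost the same proof as for Theorem~\ref{thm:bruhat},'' i.e.\ it proposes to re-run the Drake--Gerrish--Skandera argument with lower-right corners in place of upper-left ones. You instead use Theorem~\ref{thm:bruhat} as a black box and convert between the two criteria by the inclusion--exclusion identity for an $n\textrm{-by-}n$ permutation matrix, namely that the number of ones in $M_{[i]^c,[j]^c}$ equals $n-i-j$ plus the number of ones in $M_{[i],[j]}$, so that the difference of complementary-block counts for $\pi$ and $\sigma$ equals the difference of leading-block counts. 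This is a genuine (if small) improvement in economy and rigor: it avoids re-proving anything and makes the equivalence a one-line computation, at the cost of nothing. The only blemish is the slightly garbled parenthetical in your strictness argument (``but also the one of $\sigma$ in a column $\leq u$ is forced''); the correct statement is simply that $\sigma$ has exactly $n-u$ ones in columns greater than $u$, of which exactly one (at $p_1$) lies in row $1$ and is therefore excluded from $[1]^c$, giving the count $n-u-1$. Your handling of the remaining cases (where $p_0$ is not in the first row or lies to the right of $p_1$) by relabeling is fine, and in fact the paper defers those cases as well.
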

In this discussion we assumed that $p_0$ is to the left of $p_1$ and they are both in the first row. We now generalize this discussion to orthogonal cycles that are not sums of two permutations, as well as collections of orthogonal cycles. Let $M$ be a
(0,1) $m\textrm{-by-}n$ matrix and let $\mathcal{O} = \{C_a\}_{a \in A}$ be an orthogonal collection that corresponds to $M$. Let $M_1$ and $M_2$ be two (0,1)-matrices, $M_1$ corresponds to the odd positionings of cycles in $\mathcal{O}$, and $M_2$
corresponds to the even positionings. Then by definition, $\mathcal{O}$ is positive orthogonal collection if and only if
the number of ones in ${M_2}_{[i]^c,[j]^c}$ is greater than or equal to the number of ones
in ${M_1}_{[i]^c,[j]^c}$ for all $(i,j)$ (it is easy to show that there exists a pair $(i,j)$ for which it is strictly bigger).
We now introduce the notion of Bruhat order on subclasses of $(0,1)$- matrices, as it appears in \cite{Deaett}
Let $R = (r_1, r_2,\ldots,r_m)$ and $S = (s_1, s_2,\ldots,s_n)$ be positive integral vectors. Then $A(R, S)$
denotes the class of all $m\textrm{-by-}n$ $(0,1)$-matrices with row sum vector $R$ and column sum vector $S$.
If $A_1$ and $A_2$ are in $A(R, S)$, then $A_1 \leq A_2$ in the Bruhat order on $A(R, S)$ if and only if for all $1 \leq i \leq m$,$1 \leq j \leq n$, the number of ones
in ${A_1}_{[i]^c,[j]^c}$ is greater than or equal to the number of ones in ${A_2}_{[i]^c,[j]^c}$. We now conclude the following:
\begin{theorem}
Let $\mathcal{O}$, $M_1$ and $M_2$ as be as defined above. Then $\mathcal{O}$ is a positive orthogonal collection if and only if $M_2 \leq M_1$ in Bruhat
order.
\end{theorem}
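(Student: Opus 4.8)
The plan is to recognize that this statement is essentially a side-by-side comparison of two notions that the preceding discussion has already reduced to the same combinatorial inequality, so the only genuine content is: (i) checking that $M_1$ and $M_2$ lie in a common class $A(R,S)$, so that ``$M_2\le M_1$ in the Bruhat order on $A(R,S)$'' is even meaningful for this pair, and (ii) matching the reformulated definition of a positive orthogonal collection with the definition of the Bruhat order verbatim.

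First I would verify that $M_1$ and $M_2$ have the same row sum vector and the same column sum vector. For a single orthogonal cycle $C=(p_0,p_1,\ldots,p_{2k})$, the consecutive ``row pairs'' $\{p_{2i},p_{2i+1}\}$ each lie in one row and consist of one even-indexed and one odd-indexed position, and the consecutive ``column pairs'' $\{p_{2i+1},p_{2i+2}\}$ each lie in one column and again pair an odd index with an even one. Hence, within any fixed row, $C$ contributes equally many even and odd positions, and the same holds within any fixed column. Summing over all cycles of $\mathcal{O}$, and using that $M_1$ and $M_2$ are genuine $(0,1)$-matrices (the occurrences do not overlap), $M_1$ and $M_2$ have identical row sums and identical column sums; calling these $R$ and $S$, we get $M_1,M_2\in A(R,S)$.

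Next I would identify the two characterizations. Observe that $P(i,j)=\{(u,v):u>i,\ v>j\}$ is precisely the position set of the submatrix indexed by $[i]^c$ and $[j]^c$. Consequently, for each cycle $C_a$, $\mathfrak{C}_a(i,j)$ equals the number of even positions of $C_a$ in ${M_2}_{[i]^c,[j]^c}$ minus the number of odd positions of $C_a$ in ${M_1}_{[i]^c,[j]^c}$, and summing over $a$ gives that $\sum_{a}\mathfrak{C}_a(i,j)$ is the number of ones in ${M_2}_{[i]^c,[j]^c}$ minus the number of ones in ${M_1}_{[i]^c,[j]^c}$. Thus the condition $\sum_a\mathfrak{C}_a(i,j)\ge 0$ for all $(i,j)$ is literally the defining condition of $M_2\le M_1$ in the Bruhat order on $A(R,S)$. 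The only nominal difference is the extra clause in the definition of a positive orthogonal collection requiring $\sum_a\mathfrak{C}_a(i,j)>0$ for at least one pair; but, as already noted in the discussion above (and as in the single-cycle argument, using the pair determined by the position of some $p_0$), this strict inequality is automatic once all the weak inequalities hold, so it imposes nothing beyond $M_2\le M_1$. Running this comparison in both directions yields the equivalence.

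The step I expect to take the most care is the first one, namely confirming that $M_1$ and $M_2$ genuinely belong to a common class $A(R,S)$: a row or column may be visited by several row/column pairs, possibly coming from several distinct cycles of $\mathcal{O}$, so the ``equally many even and odd'' bookkeeping has to be carried out cleanly and the non-overlap assumption on the occurrences has to be used explicitly. Once that is in place, the rest is a direct identification of the two definitions through the observation $P(i,j)\leftrightarrow([i]^c,[j]^c)$.
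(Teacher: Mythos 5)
Your proposal is correct and follows essentially the same route as the paper, which presents this theorem as an immediate consequence of matching the reformulated positivity condition (the number of ones in ${M_2}_{[i]^c,[j]^c}$ is at least that in ${M_1}_{[i]^c,[j]^c}$ for all $(i,j)$) against the definition of the Bruhat order on $A(R,S)$, with the strict inequality noted to be automatic. Your explicit verification that $M_1$ and $M_2$ share row and column sum vectors, via the even/odd pairing within each row and column of every cycle, is a detail the paper leaves implicit and is a worthwhile addition.
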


We conclude with the following observation. Let $A$ be a totally positive matrix, and let $b$ be some entry in $A$. Consider the matrix $E$ for which $E_{ij}=\log(\frac{a_{ij}}{b})$. Note that $E_{ij}=0$ if and only if $A_{ij}=b$, and that since $A$ is $TP$, the matrix $E$ satisfies condition $(b)$ (before Theorem~\ref{Pachone}). Thus, according to Theorem~\ref{Pachzero}, for any entry $b$ in $A$, no collection of orthogonal cycles of $C_b(A)$ is positive.

\section{Equal $2\textrm{-by-}2$ minors}
In the previous sections we discussed the number and the positioning of equal entries in $TP$ matrices, as well as equal $(n-1)\textrm{-by-}(n-1)$ minors. It is natural to ask what can be said about the number and positioning of equal $k\textrm{-by-}k$ minors for some $2 \leq k \leq n-2$. Here we discuss the case $k=2$, and concentrate on $2\textrm{-by-}n$ $TP$ matrices. We present surprising relations between the positioning of equal $2\textrm{-by-}2$ minors and outer planar graphs.
\\
We start this section with several examples. Consider the following matrices:

$\left(
   \begin{array}{ccc}
     1 & 2 & 1 \\
     6 & 18 & 12 \\
   \end{array}
 \right),  \left(
             \begin{array}{cccc}
               1 & 2 & 3 & 1 \\
               6 & 18 & 30 & 12 \\
             \end{array}
           \right), \left(
                      \begin{array}{ccccc}
                        1 & 2 & 3 & 4 & 1 \\
                        6 & 18 & 30 & 42 & 12 \\
                      \end{array}
                    \right)
$. The number of equal $2\textrm{-by-}2$ minors in each one of those matrices is 3, 5 and 7 respectively, and from the next lemma we also get that those are the maximal possible numbers of equal $2\textrm{-by-}2$ minors. For a $2\textrm{-by-}n$ $TP$ matrix $A$, define $\alpha_A = \big\{(i,j) | det(A[{1,2}|{i,j}])=\alpha \big\}$.
\begin{lemma}\label{tppossible}
Let $A=[a_{i,j}]$ be a $2\textrm{-by-}n$ $TP$ matrix, and let $1 \leq i<j<k<w \leq n$. Then for any positive $\alpha$:\\
\begin{itemize}
  \item $\big\{ (i,j), (i,k), (j,w), (k,w) \big\} \not\subseteq \alpha_A$
  \item $\big\{ (i,k), (i,w), (j,k), (j,w) \big\} \not\subseteq \alpha_A$
\end{itemize}
\end{lemma}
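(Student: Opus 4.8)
The plan is to pass to the column vectors $v_p := (a_{1,p},a_{2,p})^{T}\in\mathbb{R}^2$ and write $m(p,q):=\det(A[\{1,2\}\mid\{p,q\}]) = \det(v_p,v_q)$ for $p<q$; total positivity says exactly that $m(p,q)>0$ for all $p<q$, and in particular every $v_p$ is a nonzero vector. The whole argument rests on one elementary observation, obtained by subtracting two minors that share a column and using bilinearity of the $2\times 2$ determinant:
\[
m(i,j)=m(i,k)\ \Longleftrightarrow\ \det(v_i,\,v_j-v_k)=0,
\qquad
m(j,w)=m(k,w)\ \Longleftrightarrow\ \det(v_w,\,v_j-v_k)=0 ,
\]
and more generally, two equal minors sharing a column force the common column vector to be proportional to the difference of the other two column vectors (here using $v_i\neq 0$, etc.).

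For the first bullet I would argue by contradiction. Assume $\{(i,j),(i,k),(j,w),(k,w)\}\subseteq\alpha_A$. From $m(i,j)=m(i,k)$ the observation gives that $v_j-v_k$ is a scalar multiple of $v_i$, and from $m(j,w)=m(k,w)$ that $v_j-v_k$ is a scalar multiple of $v_w$. The vector $v_j-v_k$ is nonzero, because $m(j,k)=\det(v_j,v_k)>0$ (as $j<k$). Hence $v_i$ and $v_w$ are both nonzero multiples of one and the same nonzero vector, so they are parallel, and therefore $m(i,w)=\det(v_i,v_w)=0$. This contradicts $i<w$ together with total positivity.

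The second bullet is the same argument with the indices permuted. Assuming $\{(i,k),(i,w),(j,k),(j,w)\}\subseteq\alpha_A$, the equalities $m(i,k)=m(i,w)$ and $m(j,k)=m(j,w)$ say that $v_k-v_w$ is proportional to $v_i$ and also to $v_j$; since $m(k,w)>0$ the vector $v_k-v_w$ is nonzero, so $v_i$ and $v_j$ are parallel and $m(i,j)=\det(v_i,v_j)=0$, contradicting $i<j$.

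I do not expect a genuine obstacle here; the only point requiring a little care is checking, in each case, that the relevant difference vector ($v_j-v_k$ in the first bullet, $v_k-v_w$ in the second) is nonzero — and this is exactly the place where the positivity of one further $2\times 2$ minor is used, namely the minor on the pair of columns that does not occur in the hypothesized four-element set. I would also remark that the hypothesis $\alpha>0$ is automatic, since every minor of a $TP$ matrix is positive; the argument only uses that the four displayed minors are equal to each other.
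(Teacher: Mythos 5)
Your proof is correct and is essentially the paper's argument: both exploit linearity of the $2\times 2$ determinant in each column to convert two equal minors sharing a column into the statement that a certain pair of columns of $A$ is linearly dependent, contradicting the positivity of the corresponding minor. The only difference is a dual choice of bookkeeping --- for the first bullet the paper groups the minors sharing columns $j$ and $k$, shows the $j$th and $k$th columns are both proportional to the (entrywise positive, hence nonzero) sum of the $i$th and $w$th columns, and contradicts $\det A[1,2|j,k]>0$, whereas you group the minors sharing columns $i$ and $w$, show the $i$th and $w$th columns are both proportional to the difference of the $j$th and $k$th columns (nonzero because $\det A[1,2|j,k]>0$), and contradict $\det A[1,2|i,w]>0$; the two routes are equivalent in substance.
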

\begin{proof}
Consider the first conclusion. Assume in contradiction that\\ $\big\{ (i,j), (i,k), (j,w), (k,w) \big\}\subseteq \alpha_A$, and hence
\begin{center}
$\det\left(
   \begin{array}{cc}
     a_{1,i} & a_{1,j} \\
     a_{2,i} & a_{2,j} \\
   \end{array}
 \right)=
 \det\left(
\begin{array}{cc}
     a_{1,j} & a_{1,w} \\
     a_{2,j} & a_{2,w} \\
   \end{array}
 \right)=
 \det\left(
   \begin{array}{cc}
     a_{1,i} & a_{1,k} \\
     a_{2,i} & a_{2,k} \\
   \end{array}
 \right)=
 \det\left(
   \begin{array}{cc}
     a_{1,k} & a_{1,w} \\
     a_{2,k} & a_{2,w} \\
   \end{array}
 \right) = \alpha$.
\end{center}

Therefore,\\ $0=\det\left(
                 \begin{array}{cc}
                   a_{1,i} & a_{1,j} \\
                   a_{2,i} & a_{2,j} \\
                 \end{array}
               \right)-\det\left(
              \begin{array}{cc}
                   a_{1,j} & a_{1,w} \\
                   a_{2,j} & a_{2,w} \\
                 \end{array}
               \right)=\\=\det\left(
                            \begin{array}{cc}
                              a_{1,i} & a_{1,j} \\
                              a_{2,i} & a_{2,j} \\
                            \end{array}
                          \right)+\det\left(
                         \begin{array}{cc}
                              a_{1,w} & a_{1,j} \\
                              a_{2,w} & a_{2,j} \\
                            \end{array}
                          \right)=\det\left(
                                          \begin{array}{cc}
                                            a_{1,i}+a_{1,w} & a_{1,j} \\
                                            a_{2,i}+a_{2,w} & a_{2,j} \\
                                          \end{array}
                                        \right)$,\\
                                        and,\\ $0=\det\left(
                                                  \begin{array}{cc}
                                                    a_{1,i} & a_{1,k} \\
                                                    a_{2,i} & a_{2,k} \\
                                                  \end{array}
                                                \right)-\det\left(
                                               \begin{array}{cc}
                                                    a_{1,k} & a_{1,w} \\
                                                    a_{2,k} & a_{2,w} \\
                                                  \end{array}
                                                \right)=\\=\det\left(
                                                             \begin{array}{cc}
                                                               a_{1,i} & a_{1,k} \\
                                                               a_{2,i} & a_{2,k} \\
                                                             \end{array}
                                                           \right)+\det\left(
                                                          \begin{array}{cc}
                                                               a_{1,w} & a_{1,k} \\
                                                               a_{2,w} & a_{2,k} \\
                                                             \end{array}
                                                           \right)=\det\left(
                                                                           \begin{array}{cc}
                                                                             a_{1,i}+a_{1,w} & a_{1,k} \\
                                                                             a_{2,i}+a_{2,w} & a_{2,k} \\
                                                                           \end{array}
                                                                         \right)$.\\

Since $A$ is $TP$, all its entries are positive, and hence we get that the $j^{th}$ and $k^{th}$ columns of $A$ are scalar multiples of $\left(
                                                                         \begin{array}{c}
                                                                           a_{1,i}+a_{1,w}\\
                                                                           a_{2,i}+a_{2,w}\\
                                                                         \end{array}
                                                                       \right)$. Thus, $\det A[1,2|j,k]=0$, in contradiction to the fact that $A$ is $TP$. For
the second conclusion, assume for contradiction that $\big\{ (i,k), (i,w), (j,k), (j,w) \big\} \subseteq \alpha_A$. Since $A$ is $TP$, $\left(
                                                                                                                                    \begin{array}{c}
                                                                                                                                      a_{1,i} \\
                                                                                                                                      a_{2,i} \\
                                                                                                                                    \end{array}
                                                                                                                                  \right) \neq \left(
                                                                                                                                                 \begin{array}{c}
                                                                                                                                                   a_{1,j} \\
                                                                                                                                                   a_{2,j} \\
                                                                                                                                                 \end{array}\right)$, and in a
similar way, we get that $\det A[1,2|k,w]=0$, which is again a contradiction.
\end{proof}
In order to understand more deeply the structure of equal $2\textrm{-by-}2$ minors, we associate graphs with $2\textrm{-by-}n$ $TP$ matrices in the following way:
Let $G$ be a graph on $n$ vertices. We say that $G$ is \emph{$TP$ attainable} if there exists a labeling of its vertices, a $2\textrm{-by-}n$ $TP$ matrix $A$ and a positive number $\alpha$ such that $E(G)\subseteq \alpha_A$ (note that from this definition, if $G$ is $TP$ attainable, then any graph that is obtained from $G$ by removing an edge is also $TP$ attainable). We call such a matrix $A$ a realization of $G$. Using the example from the beginning of the section, we get that $C_4$ is attainable, as well as the graph in figure 1
\begin{figure}[h!]
\caption{}
\centering
\includegraphics[width=0.3\textwidth]{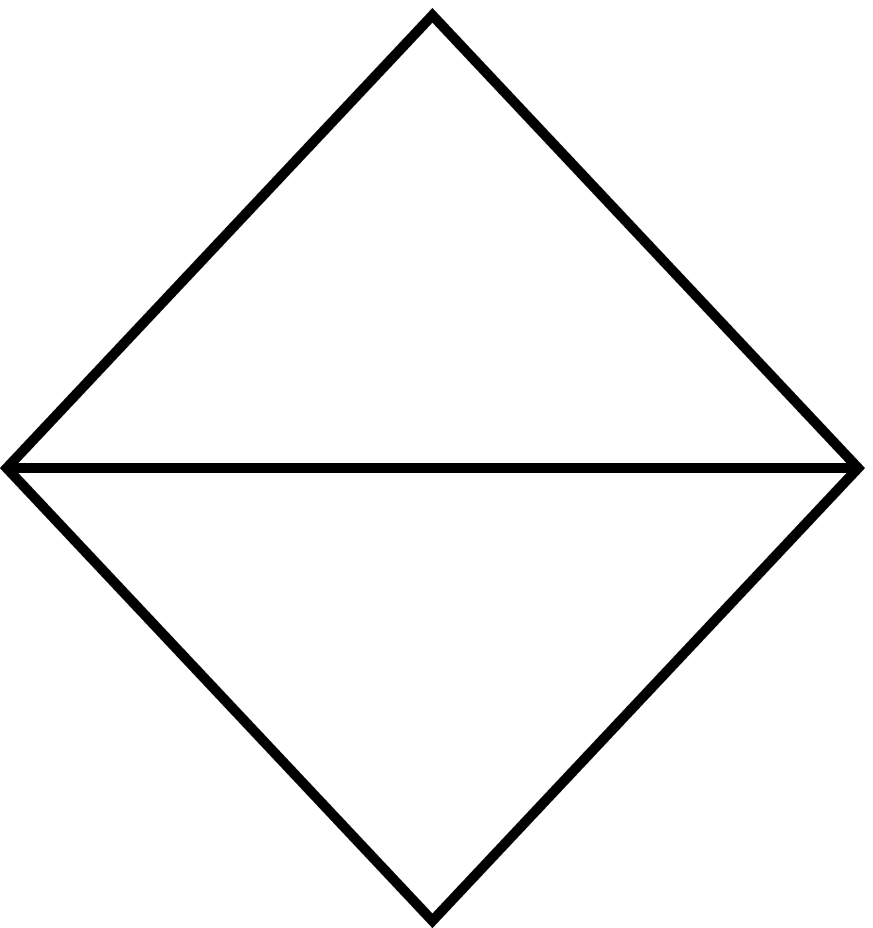}
\vspace{-0.85\baselineskip}
\label{first-fig}
\end{figure}
\\
,using the matrix $\left(
                   \begin{array}{cccc}
                     1 & 2 & 3 & 1 \\
                     6 & 18 & 30 & 12 \\
                   \end{array}
                 \right)$. From lemma~\ref{tppossible}, it is clear than not any labeling works, but there exists an appropriate labeling, as presented in figure 2.
\begin{figure}[h!]
\caption{}
\centering
\includegraphics[width=0.3\textwidth]{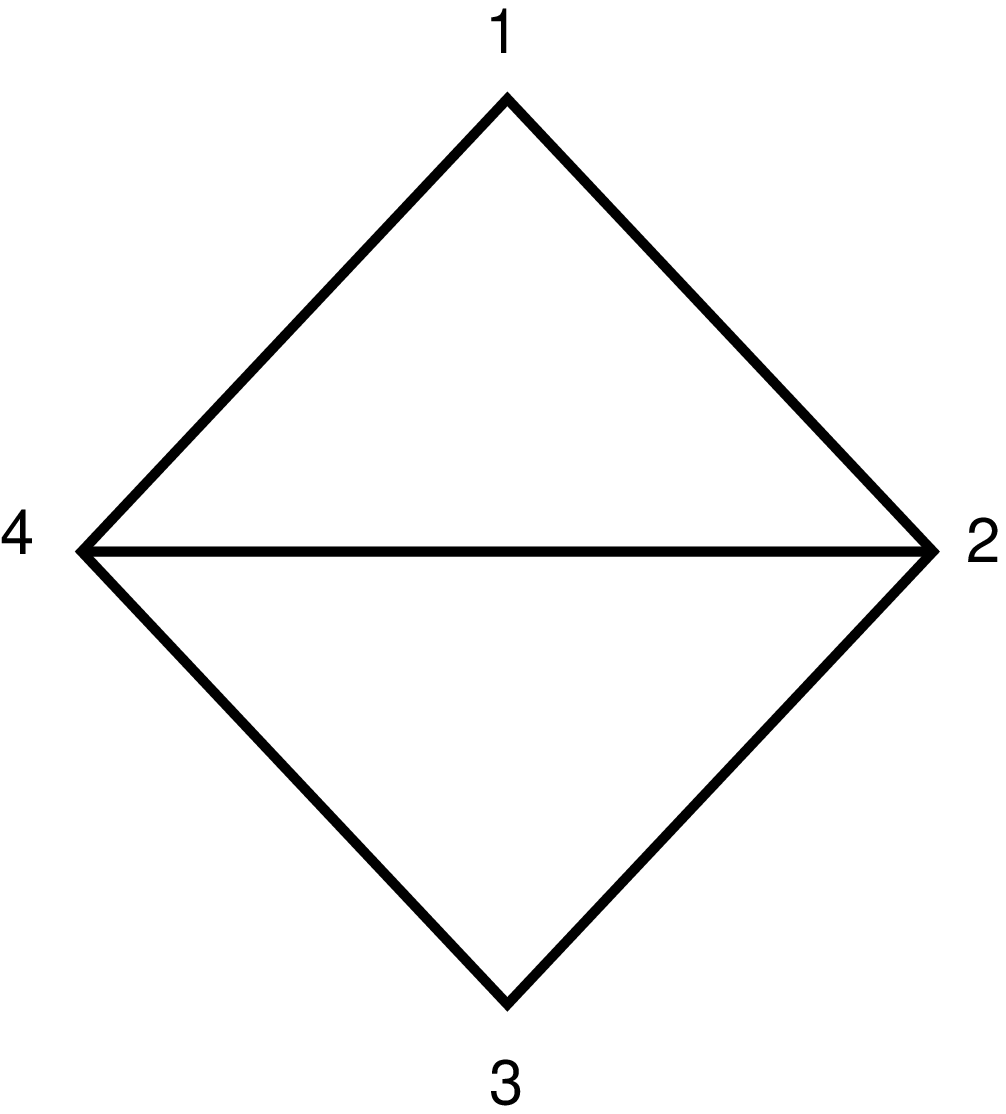}
\vspace{-0.85\baselineskip}
\label{first-fig}
\end{figure}
\\ In order to understand what are the possible positionings of equal $2\textrm{-by-}2$ minors, as well as the maximal number of such minors, we would like to have a full characterization of the set of $TP$ attainable graphs. From Lemma~\ref{tppossible}, it is clear that not every graph is a $TP$ attainable graph. For example, the lemma implies that $K_4$ and $K_{2,3}$ are not $TP$ attainable graphs. We are now ready to present a wide family of $TP$ attainable graphs- the outer planar graphs. A graph is called \emph{outerplanar} if it can be drawn in the plane without crossings in such a way that all of the vertices belong to the unbounded face of the depiction. That is, no vertex is totally surrounded by edges.
\begin{theorem}\label{planar}
Let $G$ be an outerplanar graph. Then $G$ is $TP$ attainable graph.
\end{theorem}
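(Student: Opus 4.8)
The plan is to pass to maximal outerplanar graphs and then induct on the number of vertices using an ear decomposition.

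First I would record a concrete reformulation of attainability for $2\textrm{-by-}n$ matrices. Writing the $i$-th column of a positive $2\textrm{-by-}n$ matrix $A$ as $(p_i,\,p_i t_i)$ with $p_i=a_{1,i}>0$ and $t_i=a_{2,i}/a_{1,i}$, one has $\det A[1,2\,|\,i,j]=p_i p_j(t_j-t_i)$, so $A$ is $TP$ exactly when all $p_i>0$ and $t_1<t_2<\cdots<t_n$ (the translation $t_i\mapsto t_i+c$ changes no minor and makes all entries positive, so one may also require $t_i>0$). Hence a graph $G$ on vertex set $\{1,\dots,n\}$ is $TP$ attainable via value $\alpha$ if and only if there are reals $p_i>0$ and $t_1<\cdots<t_n$ with $p_i p_j(t_j-t_i)=\alpha$ for every edge $(i,j)$; scaling all $p_i$ by a common positive constant lets us fix $\alpha=1$. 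Since removing an edge preserves $TP$ attainability, and since every outerplanar graph is a subgraph of a maximal outerplanar graph on the same vertex set (first add bridges to connect the components — this keeps the graph outerplanar — then complete to a triangulation of the polygon on that vertex set), it suffices to prove the theorem when $G$ is a triangulation of a convex $n$-gon, and for such a $G$ I would use the labeling $1,2,\dots,n$ given by the cyclic order of the polygon. The claim to prove by induction on $n$ is then: there exist $p_i>0$ and $t_1<\cdots<t_n$ with $p_i p_j(t_j-t_i)=1$ for every edge. The base $n\le 3$ is immediate; for $K_3$ one may take $(p_1,p_2,p_3)=(1,2,1)$ and $(t_1,t_2,t_3)=(0,\tfrac12,1)$ and then shift the $t$'s.

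For the inductive step, $n\ge 4$, I would invoke two facts about a polygon triangulation: it has at least two ears (degree-$2$ vertices whose two neighbours are adjacent), and vertices $1$ and $n$ cannot both be ears, because an ear at $1$ forces the chord $(2,n)$ and an ear at $n$ forces the chord $(1,n-1)$, and these two chords cross. Hence there is an ear $v$ with $1<v<n$; its neighbours are the consecutive vertices $v-1$ and $v+1$, and $(v-1,v+1)$ is an edge. Deleting $v$ (and relabelling the larger vertices down by one) yields a polygon triangulation $H'$ on $n-1$ vertices in which $v-1$ and $v+1$ become consecutive; realize $H'$ by the inductive hypothesis, keeping its data, and insert a column for $v$ strictly between columns $v-1$ and $v+1$ by setting $t_v:=\dfrac{p_{v-1}t_{v-1}+p_{v+1}t_{v+1}}{p_{v-1}+p_{v+1}}$ (a strict convex combination, so $t_{v-1}<t_v<t_{v+1}$ and global monotonicity of the $t$'s is preserved) and $p_v:=\dfrac{1}{p_{v-1}(t_v-t_{v-1})}>0$. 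This choice of $t_v$ yields the identity $p_{v-1}(t_v-t_{v-1})=p_{v+1}(t_{v+1}-t_v)$, which forces both new edge conditions, at $(v-1,v)$ and at $(v,v+1)$, to hold at once; the ear edge $(v-1,v+1)$ and every other edge of $G$ is literally an edge of $H'$, hence already satisfied. This closes the induction, and with the reductions above it proves the theorem.

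The only place where real care is needed is the combinatorial claim in the inductive step: the deleted ear must not be an endpoint of the index range $\{1,\dots,n\}$, for otherwise after relabelling its two neighbours would no longer be consecutive and the one-parameter convex-combination insertion would not be directly available. The crossing-chords observation is exactly what guarantees such an ear; once it is in hand, the reinsertion is a short computation, and the main effort lies in setting up the reformulation, the reduction to polygon triangulations, and the cyclic labeling.
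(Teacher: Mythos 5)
Your proof is correct and follows the same overall strategy as the paper's: reduce to maximal outerplanar graphs (polygon triangulations) labeled in cyclic order, and induct by deleting an ear and re-inserting a column that realizes the two new edge conditions. In fact your insertion is literally the paper's: using the ear-edge relation $p_{v-1}p_{v+1}(t_{v+1}-t_{v-1})=1$, your choices give $p_v=p_{v-1}+p_{v+1}$ and $p_v t_v=p_{v-1}t_{v-1}+p_{v+1}t_{v+1}$, i.e.\ the new column is the sum of its two neighbouring columns, which is exactly what the paper does in its case (a). The genuine difference is how the inductive step is organized. The paper deletes an arbitrary degree-$2$ vertex, so its neighbours may sit at the two ends of the linear (clockwise) order; this forces a separate case (b), handled by repeatedly rescaling the first and last columns by $2$ and $1/2$ until their difference is entrywise positive and then prepending that difference as a new column. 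You eliminate case (b) with a combinatorial lemma: a triangulation of a convex $n$-gon ($n\ge 4$) has at least two ears, and ears at both positions $1$ and $n$ would force the crossing chords $(2,n)$ and $(1,n-1)$, so some ear lies strictly between $1$ and $n$ and its neighbours remain consecutive after deletion. That lemma is correct (the crossing check and the two-ears theorem both hold), and it buys a single-case induction with a cleaner insertion formula; the cost is having to invoke the two-ears theorem, which the paper avoids by accepting the extra rescaling argument. Your reformulation $a_{1,i}=p_i$, $a_{2,i}=p_i t_i$ with $\det A[1,2\,|\,i,j]=p_ip_j(t_j-t_i)$, together with the translation-invariance of the minors in the $t_i$, correctly handles positivity of the entries, so there is no gap.
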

\begin{proof}
First, note that it is enough to prove  this theorem for maximal outer planar graphs (in such graphs, every bounded face is a triangle). Let $G$ be a maximal outerplanar graph. We call a labeling of the vertices in $G$ \emph{clockwise} if this labeling is obtained by choosing some vertex, labeling it with "1", and then walking clockwise on the unbounded face of $G$ and labeling the vertices with "2", "3", and so on. The labeling in figure 2 is an example of a clockwise labeling. We prove by induction that every maximal outerplanar graph is $TP$ attainable using a clockwise labeling. For $n \in \{2,3\}$ it is clear using the $2\textrm{-by-}3$ matrix from the beginning of the section. Now, assume that the statement is true for all the numbers smaller than $n$, and let $G$ be a maximal outer
planar graph on $n$ vertices. $G$ has a vertex $v$ of degree 2, and we denote its neighbors by $u$ and $w$. Consider the graph $G \setminus v$. By the inductive assumption, this graph is $TP$ attainable with a clockwise labeling, and let $y$ be the vertex with the label "1" in $G \setminus v$ (it is possible to have $y \in \{u,w\}$). Without loss of generality, there are two cases. a) The labeling of $u$  and $w$ in $G \setminus v$ is $i$ and $i+1$ respectively (for $1 \leq i \leq n-2$).
b) The labeling of $u$ and $w$ in $G \setminus v$ is $n-1$ and $1$ respectively. We examine each one of the cases, starting from a). Let $A_v$ be a realization of $G \setminus v$ that corresponds to the clockwise labeling ($A_v$ is of order $2\textrm{-by-}(n-1)$). Let $A$ be a $2\textrm{-by-}n$ matrix in which
$A[1,2|1,2,\ldots,i]= A_v[1,2|1,2,\ldots,i] $, $A[1,2|i+2,i+3,\ldots,n]= A_v[1,2|i+1,i+2,\ldots,n-1] $ and $A[1,2|i+1]= A_v[1,2|i]+ A_v[1,2|i+1]$.  We will show now that $A$ is a realization of $G$ that corresponds to the clockwise labeling for which the label of $y$ is "1". First, note that $\det A[1,2|i,i+1]=
\det \left(
                     \begin{array}{cc}
                       a_{1,i} & a_{1,i+1} \\
                       a_{2,i} & a_{2,i+1} \\
                     \end{array}
                   \right)= \\=\det \left(
                                                 \begin{array}{cc}
                                                   a_{v_{1,i}} & a_{v_{1,i}}+a_{v_{1,i+1}} \\
                                                   a_{v_{2,i}} & a_{v_{2,i}}+a_{v_{2,i+1}} \\
                                                 \end{array}
                                               \right)=\det \left(
                                                                              \begin{array}{cc}
                                                                                a_{v_{1,i}} & a_{v_{1,i+1}} \\
                                                                                a_{v_{2,i}} & a_{v_{2,i+1}} \\
                                                                              \end{array}
                                                                           \right)=  \alpha,  $\\ and similarly, $\det A[1,2|i+1,i+2]= \alpha$. In addition,
from Corollary 3.1.6 in \cite{Fallat}, a $2\textrm{-by-}n$ matrix is $TP$ if and only if all its contiguous $2\textrm{-by-}2 $ submatrices are $TP$. Therefore we get that $A$ is $TP$, and case a) is proved. Now let us consider case b). Let $A_v$ be a realization of $G \setminus v$ that corresponds to the clockwise labeling, and let $p$ and $q$ be the first and the last column
in $A_v$ respectively. Since $A_v$ is $TP$, $p$ and $q$ are independent, and for each $1 \leq i \leq n-1$ there exists a unique pair of real numbers $k_i,l_i$ such that the $i^{th}$ column of $A_v$ equals $k_ip+l_iq$ (note that $k_1=1, l_1=0, k_{n-1}=0, l_{n-1}=1$). We now obtain another realization of $G \setminus v$ (using the same labeling), denoted by $B_v$. We define the first and the last column of
$B_v$ to be $2p$ and $0.5q$ respectively. The $i^{th}$ column of $B_v$ would be $k_i(2p)+l_i(0.5q)$. Note that for any $1 \leq i <j \leq n-1$,
$\det B_v[1,2|i,j]=(k_il_j-l_ik_j)\det\left(
                                        \begin{array}{cc}
                                          p & q \\
                                        \end{array}
                                      \right)=\det A_v[1,2|i,j]$, and hence $B_v$ is a realization of $G \setminus v$ with the same labeling. We can repeat this process
until we obtain a realization $B_v$ of $G \setminus v$ for which the first column (denoted by $p$) is entrywise bigger than the last column (denoted by $q$). Consider a $2\textrm{-by-}n$ matrix $A$ such that $A[1,2|1]=p-q$, $A[1,2|2,3,\ldots,n]= B_v$. It is easy to see that $A$ is a realization of $G$ with the clockwise labeling such that $v$ is
labeled with "1", and thus we finished the proof of case b).
\end{proof}
Using this theorem, we obtain the following corollary
\begin{corollary}
The maximal number of equal $2\textrm{-by-}2$ minors in a $2\textrm{-by-}n$ $TP$ matrix is at least $2n-3$.
\end{corollary}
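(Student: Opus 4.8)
The plan is to produce, for each $n$, a concrete graph on $n$ vertices with $2n-3$ edges that is $TP$ attainable, and then simply read off the desired matrix from the realization supplied by Theorem~\ref{planar}. So the corollary will be entirely a packaging of that theorem together with an edge count.

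First I would recall the standard fact that a maximal outerplanar graph $G$ on $n \geq 2$ vertices has exactly $2n-3$ edges: viewing such a $G$ as a triangulation of a convex $n$-gon, it consists of the $n$ boundary edges together with $n-3$ pairwise non-crossing diagonals, for a total of $2n-3$. (The case $n=2$ degenerates to a single edge, and $2n-3=1$; the cases $n=3,4,5$ give $3,5,7$, matching the examples at the start of the section.) In particular, a maximal outerplanar graph exists on $n$ vertices for every $n$, and it attains the largest possible number of edges among outerplanar graphs on $n$ vertices.

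Next I would invoke Theorem~\ref{planar}: since a maximal outerplanar graph $G$ is in particular outerplanar, it is $TP$ attainable, so there exist a labeling of its vertices, a $2\textrm{-by-}n$ $TP$ matrix $A$, and a positive number $\alpha$ such that $E(G) \subseteq \alpha_A$. Consequently $|\alpha_A| \geq |E(G)| = 2n-3$; that is, $A$ has at least $2n-3$ of its $2\textrm{-by-}2$ minors equal to the common value $\alpha$, which is precisely the assertion of the corollary.

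There is essentially no obstacle in this argument — all the real content has already been absorbed into the proof of Theorem~\ref{planar} — so the only thing I would add is a short remark connecting the bound to the opening examples of the section and noting that Lemma~\ref{tppossible} strongly suggests $2n-3$ is in fact the exact maximum, even though the corollary as stated claims only a lower bound.
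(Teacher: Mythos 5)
Your argument is correct and is exactly the one the paper intends: the corollary is stated immediately after Theorem~\ref{planar} with no separate proof, the implicit reasoning being that a maximal outerplanar graph on $n$ vertices has $2n-3$ edges and is $TP$ attainable, which is precisely your packaging. Nothing is missing.
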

Theorem~\ref{planar} states that the set of outerplanar graphs is a subset of $TP$ attainable graphs. The converse, however, is not true. The graph in figure 3 is not outerplanar, and yet it is a $TP$ attainable graph, as is shown by the matrix $\left(
                                                                                                                        \begin{array}{cccccc}
                                                                                                                          8 & 34 & 9 & 14 & 20 & 6 \\
                                                                                                                          4 & 24 & 8 & 14 & 24 & 10 \\
                                                                                                                        \end{array}
                                                                                                                      \right)
$.
\begin{figure}[h!]
\caption{}
\centering
\includegraphics[width=0.3\textwidth]{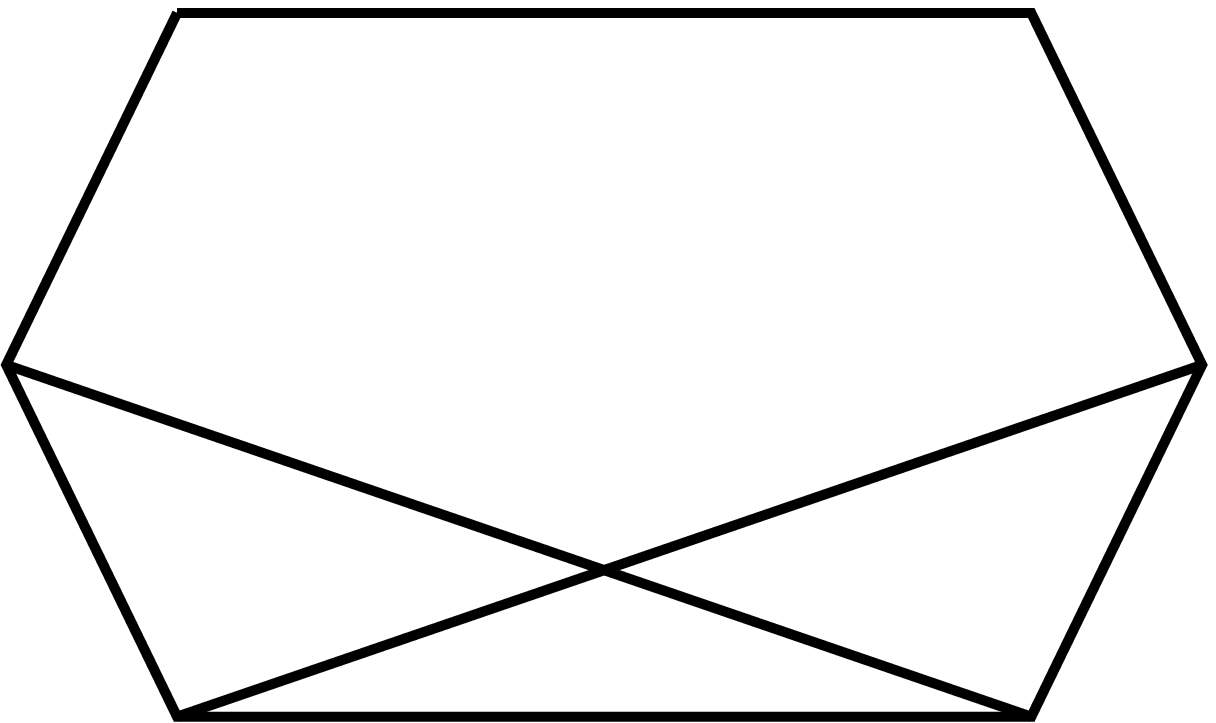}
\vspace{-0.85\baselineskip}
\label{first-fig}
\end{figure}
\\                                                                                                                                                                                                    
\begin{thebibliography}{1}


\bibitem{Alon}
  N. Alon,
  Combinatorial Nullstellensatz.
  Combinatorics, Probability and Computing 8 (1999) 7-29.

\bibitem{Deaett}
R.A. Brualdi and L. Deaett, More on the Bruhat order for (0,1)-matrices, Linear Algebra Appl. 421 (2–3) (2007) 219–232.

\bibitem{Drake}
B. Drake, S. Gerrish and M. Skandera,
Two new criteria for comparison in the Bruhat order.
Electron. J. Combin.
11(1) (2004) Note 6, 4 pp. (electronic).

\bibitem{Fallat}
S. M. Fallat and C. R. Johnson,
Totally Nonnegative Matrices.
Princeton University Press (2011).

\bibitem{Furedi}
Z. F\"uredi and P. Hajnal, Davenport-Schinzel theory of matrices. Discrete Mathematics 103 (1992) 233-251.

\bibitem{Gantmacher}
F. R. Gantmacher and M. G. Krein, Oscillation matrices and kernels and small vibrations of mechanical systems (AMS, Providence, RI,
2002).
\bibitem{Garloff}
J. Garloff, Intervals of totally nonnegative and related matrices. Proceedings in Applied Mathematics and Mechanics 1(1) (2002)
496-497.
\bibitem{Gasca}
M. Gasca and C.A. Micchelli, Total Positivity and Its Applications (Kluwer Academic Publ., Dordrecht, 1996).

\bibitem{Guy}
R.K. Guy, A many-faceted problem of Zarankiewicz, in: Chartrand, Kappor (Eds.), Lecture Notes in Mathematics, Vol. 110, Springer, Berlin, 1968, pp. 129-148.

\bibitem{Mendelsohn}
N.S. Mendelsohn, Packing a square lattice with a rectangle-free set of points. Mathematics Magazine 60(4) (1987) 229-233.

\bibitem{Pach}
J. Pach and G. Tardos,
Forbidden paths and cycles in ordered graphs and matrices. Israel Journal of Mathematics
155 (2006) 359-380.

\bibitem{Roman}
S. Roman,
A problem of Zarankiewicz.
Journal of Combinatorial Theory, Series A
18 (1975) 187-198.

\bibitem{Shahla}
S. Naserasr,
The logarithmic method and the solution to the $TP_2$-completion problem.
Ph.D. thesis, College of William and Mary, 2010.

\bibitem{Szemeredi}
E. Szemer\'edi, W.T. Trotter, Extremal problems in discrete geometry.
Combinatorica 3 (1983) 381-392.















 \end{thebibliography}
\end{document}